  \theoremstyle{plain}
  \newtheorem{theorem}{Theorem}[section]
  \newtheorem{proposition}[theorem]{Proposition}
  \theoremstyle{definition}
  \newtheorem{definition}[theorem]{Definition}
  \newtheorem{remark}[theorem]{Remark}
    \newtheorem{example}[theorem]{Example}
  \numberwithin{equation}{section}
  \numberwithin{figure}{section}
  \renewcommand{\cH}{{\mathcal H}}
   \newcommand{\cU}{{\mathcal U}}
  \newcommand{\cQ}{{\mathcal Q}}
  \newcommand{\cE}{{\mathcal E}}
  \renewcommand{\cD}{{\mathcal D}}
  \newcommand{\cP}{{\mathcal P}}
  \newcommand{\cC}{{\mathcal C}}
  \newcommand{\cG}{{\mathcal G }}
  \newcommand{\U}{{\mathbf U}}
   \newcommand{\ba}{\begin{eqnarray}}
   \newcommand{\na}{\end{eqnarray}}
   \newcommand{\ban}{\begin{eqnarray*}}
   \newcommand{\nan}{\end{eqnarray*}}
  \newcommand{\Fred}{{\bf  Fred }}
    \renewcommand{\Im}{{\bf Im }}
  \newcommand{\PU}{{\bf  PU}}
   \newcommand{\Spec}{{\bf  Spec}}
\newcommand{\BSO}{{\bf  BSO}}
\newcommand{\BSpin}{{\bf  BSpin}}
 \newcommand{\Td}{{\bf  Td}}
 \newcommand{\Cl}{{\bf  Cliff}}
  \newcommand{\CC}{{\mathbb C}}
  \newcommand{\RR}{{\mathbb R}}
  \newcommand{\ZZ}{{\mathbb Z}}
  \renewcommand{\a}{\alpha}
\renewcommand{\l}{\lambda}
    \newcommand{\disp}{\displaystyle}
\begin{document}

  \title[Differential   Twisted K-theory and  Applications]
{Differential   Twisted K-theory and Applications}

  \author[A.L. Carey]{Alan L. Carey}
  \address[Alan L. Carey]
  {Mathematical Sciences Institute\\
  Australian National University\\
  Canberra ACT 0200 \\
  Australia}
  \email{acarey@maths.anu.edu.au}
  \author[J. Mickelsson]{Jouko Mickelsson}
  \address[Jouko Mickelsson]
  {Department of Mathematics and Statistics\\
  University of Helsinki\\
  Finland\\  and Department of Theoretical Physics\\KTH \\Stockholm \\Sweden}
  \email{jouko.mickelsson@helsinki.fi}

  \author[B.L.  Wang]{Bai-Ling Wang}
  \address[Bai-Ling Wang]
  {Department of Mathematics\\
    Mathematical Sciences Institute\\
    Australian National University\\
    Canberra, ACT 0200\\ Australia.
}
  \email{wangb@maths.anu.edu.au}

  \subjclass[2000]{55R65, 53C29, 57R20, 81T13}
  \keywords{Differential  twisted K-theory, Chern character, D-brane charges}

  \begin{abstract} In this paper, we develop differential 
  twisted K-theory
and   define a twisted Chern character on twisted K-theory which depends on 
a choice of connection and curving on the twisting gerbe. 
We also establish the general Riemann-Roch theorem in twisted K-theory and find
some applications in the study of twisted K-theory of  compact  simple Lie groups.
   \end{abstract}
  \maketitle
\tableofcontents

  \section{Introduction}

 Generalized differential cohomology theories 
have recently excited considerable interest.
For example, Cheeger-Simons differential characters play a 
role in index theory.
  In \cite{Lott}, Lott developed $\RR/\ZZ$-valued index theory for  
Dirac operators
  coupled to virtual complex vector bundles with trivial Chern character, 
extending Atiyah-Patodi-Singer's reduced eta-invariants
  for flat vector bundles.  The resulting K-theory is called  
K-theory with $\RR/\ZZ$ coefficients. It  is related to what is  now called
`differential K-theory' as proposed
  by Freed \cite{Fre1},  Hopkins and Singer \cite{HopSin}, and further developed in  \cite{BunSch}. Our aim here is to extend some of these ideas to
  create twisted differential K-theory.
   
   Henceforth  $X$ will always denote a smooth manifold,
$\cH$ an infinite dimensional separable complex  Hilbert space and $\PU(\cH)$ the projective unitary group of $\cH$.  The term `twist' in twisted K-theory  $K^{ev/odd}(X, \sigma)$
   of $X$ refers to a continuous map
 $\sigma: X\to K(\ZZ, 3)$ which necessarily determines a principal  
 $\PU(\cH)$-bundle $\cP_\sigma$
  over $X$.  

We have been motivated to write this paper by 
some questions from physics.
The reader unfamiliar with the language can pass on to the next paragraph.
K-theory has previously been used to study anomaly
  cancellation problems
 for action principles in the presence of
D-branes in string theory and M-theory.
 For Type II superstring theory with non-trivial B-field on $X$,
 it is believed  that Ramond-Ramond charges
 lie in twisted K-theory  $K^{ev/odd}(X, \sigma)$
   of $X$ (Cf.\cite{Wit1} \cite{Wit2}) with the twist, as above, given by
 $\sigma: X\to K(\ZZ, 3)$.  
The cohomology classes  of Ramond-Ramond charges are twisted cohomology classes.

 To describe this twisted cohomology we start with
  a closed differential form $H$ representing
 the image of $[\sigma]$ under the map $H^3(X, \ZZ)\to H^3(X, \RR)$.  
 We then need some additional concepts, reviewed in Section 2, 
 beginning with the canonical  `lifting bundle gerbe' 
$\cG_\sigma$ over $X$ determined by $\cP_\sigma$. It may be equipped with a
gerbe connection $\theta$ and curving $\omega$.
Then $H$ is in fact the normalized curvature of  the triple
$
 \check{\sigma} = (\cG_\sigma, \theta, \omega). 
 $
 Next, 
 twisted cohomology $H^{ev/odd}(X, d-H)$ is the cohomology of the complex of differential forms
 on $X$ with the coboundary operator given by
 $d-H$.  It is known (Cf. \cite{AS2}, \cite{BCMMS}, \cite{MatSte})  that  the twisted Chern character 
 \[
Ch_{\check{\sigma}}: K^{ev/odd}(X, \sigma)  \longrightarrow H^{ev/odd} (X, d-H).
\]
depends on a choice of a gerbe connection  and a curving  on  the underlying gerbe $\cG_\sigma$.

 In many applications,  the  twisted Chern character  is trivial due to the fact that the  twisted
 K-groups are torsion. This means that the corresponding twisted cohomology $H^*(X, d-H)$ is zero in these examples.  In order to study  torsion elements in twisted K-theory, we 
 give a geometric construction of  a
 `differential twisted K-theory   with twisting given by   ${\check\sigma}$'.
 That is, we replace ordinary twisted K-theory by a more refined
cohomology theory that depends not only on the twist $\sigma$ 
but also on the gerbe connection and curving. 
More specifically, in Section 3,  
we   will construct  our differential twisted K-theory, denoted   $\check{K}^{ev/odd}(X, \check{\sigma})$ and a
differential twisted Chern character  $ ch_{ \check{\sigma}} $ on $\check{K}^{ev/odd} (X, \check{\sigma})$. We show that it gives   a  well-defined map   
     \ba\label{ch}
   Ch_{ \check{\sigma}}: K^{ev/odd} (X, \sigma)  \longrightarrow H^{ev/odd}(X, d-  H),
   \na
  such that the following diagram commutes:
    \ba\label{c:1}
  \xymatrix{
    \check{K}^{ev/odd}  (X, \check{\sigma}) \ar[r]\ar[d]^{ch_{\check{\sigma}}}
      & K^{ev/odd}(X, \sigma)\ar[d]^{Ch_{\check{\sigma}}}   \\
  \Omega^{ev/odd}_0(X,  d- H) \ar[r]  & H^{ev/odd}(X, d- H), }
\na
where we have used the notation $ \Omega^{ev/odd}_0(X, d- H) $ to denote the image of 
the  differential Chern character  map
\[
ch_{\check{\sigma}}: \check{K}^{ev/odd}(X, \check{\sigma}) \longrightarrow
 \Omega^{ev/odd}(X,  d- H ).
\]
In fact, the commutative diagram (\ref{c:1})  is part of a  commutative interlocking  diagram of exact sequences  which characterizes   differential twisted K-theory   $\check{K}^{ev/odd}(X, \check{\sigma})$, see      Theorems  \ref{diff:twsitedK1} and \ref{diff:twsitedK0} and Remark \ref{locked:terms}.   

We  remark that in the 
construction we need to choose
local trivializations of the principal $\PU(\cH)$-bundle $\cP_\sigma$. In addition we also need the notion 
of a `spectral cut' (defined later on)  for a $\PU(\cH)$-equivariant  family of self-adjoint operators.  Proposition \ref{equivalence} ensures that 
differential twisted K-theory and its differential Chern character don't depend on these choices.
We also give a universal model of differential twisted K-theory  (cf. Theorems  \ref{universal:0} and \ref{universal:1})   such  that  differential Chern character
can be defined directly by a choice of  a  connection on a certain universal bundle classifying twisted
K-theory. 

When the twisting $\sigma: X \to K(\ZZ, 3)$ is trivial our construction gives rise to a different model for the
ordinary differential K-theory previously studied in  \cite{BunSch}, \cite{Fre1} and \cite{HopSin}.
Roughly stated the main idea is to provide some
`geometric cycles' as models for differential K-theory.
 Hopkins-Singer (\cite{Fre1},  \cite{HopSin})
apply homotopy theory to  the space of differential functions on the products of $X$ with a varying simplex, taking values in a classifying space for K-theory.  The viewpoint of \cite {BunSch}
is to apply the  family index theorem, so their   models
are geometric fibrations equipped with  
    elliptic differential operators. 
 We have utilized the spectral theory of self-adjoint operators to get local bundles over a groupoid, the 
double intersections of an open cover, for the odd case of twisted differential K-theory (and an adaptation of this for the even case).
    
  In Section 4  we establish a 
 generalization of   the Atiyah-Hirzebruch 
  Riemann-Roch theorem in  twisted K-theory. Note that 
   the Riemann-Roch theorem in twisted K-theory for $K$-oriented maps was discussed in   \cite{BEM}.
   Our refinement in Section 4 is to extend the theorem
 (see Theorems \ref{RR:1} and Theorem \ref{RR:2})  so that it applies to non-$K$-oriented
 smooth maps $f: X\to Y$.  When $[\sigma] =0$ and $f$ is $K$-oriented,  our theorem   reduces to   the standard Atiyah-Hirzebruch
  Riemann-Roch theorem (\cite{AtiHir}) .

In order to apply these ideas to string geometry we need one further concept, namely,  the
notion of twisted  eta forms for those twisted K-classes 
that are torsion. This is useful in studying non-equivariant
twisted K-theory  of a compact Lie group.  We discover that these twisted  eta forms
  can be used to distinguish different twisted K-classes. Explicit computations are done
  for the Lie groups $SU(2)$ and $SU(3)$ as these examples arise in
  conformal field theory.

\noindent {\bf Acknowledgements}. We 
thank the referee for carefully reading the manuscript and providing helpful advice.
We also thank Thomas Schick for his comments on  the manuscript.
We acknowledge the assistance of Michael Murray for the discussion of Deligne cohomology.
The authors acknowledge  the support of: the Erwin Schr\"odinger Institute (AC, JM, BW),
 the Australian
  Research Council (AC, BW),  the Clay Mathematics Institute (AC) and
   the Academy of Finland grant 516 8/07-08 (JM).

\section{Preliminary notions}

  \subsection{Twisted K-theory }  

In this subsection, we briefly review some basic facts about twisted K-theory, the main references 
 are \cite{AS1}, \cite{BCMMS}, \cite{CW2}
 and \cite{Ros}. Let  $\PU(\cH)$ denote  the projective unitary group (equipped with the norm topology)  of an infinite dimensional separable complex  Hilbert space $\cH$
and recall (cf. \cite{Kui})  that it has the homotopy type of an 
Eilenberg-MacLane space $K(\ZZ, 2)$. 
The  classifying space   $B\PU(\cH)$ for  principal $\PU(\cH)$-bundles   is an 
Eilenberg-MacLane space $K(\ZZ, 3)$. 
Thus, the set of isomorphism classes of 
principal $\PU(\cH)$-bundles over   $X$ is canonically identified
with the space $[X, K(\ZZ, 3)]$ of homotopy classes of maps from $X$ to
$K(\ZZ, 3)$ which in turn (Proposition 2.1 in \cite{AS1}) is just
$H^3(X, \ZZ).$

Much of the theory applies to  locally compact,
metrizable and separable
   spaces $X$, however, we will remain with the case where
   $X$ is a manifold. Let  $\cP_\sigma$  be a principal
$\PU(\cH)$-bundle  over $X$ whose classifying map $\sigma: X \to K(\ZZ, 3)$, called a twisting,  defines
 an element  $[\sigma]  \in H^3(X, \ZZ).$
   There is a well-defined conjugation action
   of $\PU(\cH)$ on the space $\Fred(\cH)$ of bounded Fredholm operators.
   Let
   $\Fred(\cP_\sigma)= \cP_\sigma \times_{\PU(\cH)} \Fred(\cH)  $
   be the associated bundle of Fredholm operators.

 \begin{definition}  If $X$ is compact,  the twisted K-group $K^0 (X, \sigma)$ is
  defined to be the space of homotopy classes of sections of $\Fred(\cP_\sigma)$
(see \cite{Ros}).
   If $X$ is locally compact, then the twisted K-group
$K^0 (X, \sigma)$ is defined to be the space of homotopy classes of
`compactly supported sections'
of $\Fred (\cP_\sigma)$, where compactly supported means
these sections take values in the invertible operators away from a compact set.
 If $X_0$ is a closed subset of $X$, we may define
in the obvious way the relative twisted K-group
 $$K^0 (X, X_0; \sigma) = K^0 (X-X_0, \sigma).$$
Replacing $\Fred (\cH)$ by the space $\Fred^{sa}_*$ of self-adjoint Fredholm operators with both
 positive and negative essential spectrum, we can
similarly define the twisted K-group $K^1(X, \sigma)$.
\end{definition} 

\begin{remark}   In the applications we consider unbounded 
self adjoint Fredholm operators of the form $D+A$ where $D$
is a fixed  unbounded self adjoint operator and $A$ is bounded and varies. We utilise the topology on the unbounded self adjoint Fredholm operators
induced by the map $D+A\to F_{D+A}=(D+A)(1+(D+A)^2)^{-1/2}$ \cite{CP1}.
This is justified for example by Theorem 8 in Appendix A of \cite{CP1} which proves the estimate
$||F_{D+A}-F_D||<||A||$. This estimate implies that if $A$ varies smoothly in the uniform norm then so does $F_{D+A}$ and hence in this sense so does $D+A$. 
\end{remark}

  Next we  choose
   a local trivialization  of  $\cP_\sigma$ with respect  to a good open cover $X = \bigcup_i U_i$ with
   transition functions  given by
$g_{ij}: U_i \cap U_j  \longrightarrow \PU(\cH)$ and lifts
$\hat{g}_{ij}:  U_i \cap U_j  \longrightarrow \U(\cH),$ for each $g_{ij}$.
Then we have
\ba\label{{C}ech}
\sigma_{ijk} \cdot Id  = \hat{g}_{ij}\hat{g}_{jk}\hat{g}_{ki}
\na
for a $U(1)$-valued \v{C}ech cocycle $\sigma_{ijk}: U_{ijk} \to U(1)$, where
$U_{ijk} = U_i\cap U_j \cap U_k$ and $Id$ is the identity operator in $\cH$.

Then an element in $K^0 (X, \sigma)$ can be represented by either a continuous
$\PU(\cH)$-equivariant map $f: \cP_\sigma \to \Fred(\cH)$, or by a twisted family of locally defined functions
$\{
f_i: U_i \longrightarrow \Fred ( \cH )
\}$
satisfying $f_i= \hat{g}^{-1}_{ij} f_j  \hat{g}_{ij}$.    Similarly,
an  element in $K^1 (X, \sigma)$ can be represented either by a continuous
$\PU(\cH)$-equivariant map $f: \cP_\sigma \to \Fred_*^{sa}$, or  a twisted family of locally defined functions
$\{
f_i: U_i \longrightarrow \Fred^{sa}_* 
\},$
satisfying $f_j = \hat{g}^{-1}_{ij} f_i \hat{g}_{ij}.$

Let $\sigma_1,    \sigma_2:  X\to K(\ZZ, 3)$ be a pair of continuous maps.  We need to define a map 
  $ \sigma_1   +  \sigma_2  $ from $X$ to $K(\ZZ, 3)$  such that 
  \[
[\sigma_1   +  \sigma_2 ] = [\sigma_1]+[ \sigma_2 ]
\]
in $H^3(X, \ZZ)$.
 In order to define this map, we fix an isomorphism $\cH \otimes \cH \cong \cH$
 which induces a group homomorphism
 $
 \U(\cH) \times \U(\cH) \longrightarrow \U(\cH)
 $
 whose restriction to the center is the group multiplication on $U(1)$. So we have a 
 group homomorphism
 \[
 \PU(\cH) \times \PU(\cH) \longrightarrow \PU(\cH)
 \]
 which defines  a continuous map, denoted $m_\ast$, of CW-complexes
 \[
 B \PU(\cH) \times B \PU(\cH)  \longrightarrow  B\PU(\cH).
 \]
 As $B\PU(\cH)$ is a $K(\ZZ, 3)$,  we may think of this as  a continuous map taking 
 $
 K(\ZZ, 3) \times K(\ZZ, 3)$ to $K(\ZZ, 3), 
 $
 which can be used to define 
 \[
\xymatrix{
\sigma_1   +  \sigma_2 :  \quad   X\ar[r]^{(\sigma_1,  \sigma_2  )  \ \  } & K(\ZZ, 3) \times K(\ZZ, 3) \ar[r]^{
\qquad m_*}   & 
K(\ZZ, 3). }
\]

  Just as in ordinary K-theory, we can define twisted K-groups $K^i(X, \sigma)$ for all
  $i\in \ZZ$ such that the twisted K-theory is a generalized cohomology theory with period $2$
  on the category of topological spaces equipped with a principal $\PU(\cH)$-bundle.
     Twisted K-theory satisfies the following basic properties  \cite{AS1}\cite{CW2}.\\
(1) For any proper continuous map $f: X\to Y$
  there exists a natural pull-back map
 \ba\label{pull:back}
 f^*: K^i(Y, \sigma) \longrightarrow  K^i (X, f^*\sigma),
 \na
for any $\sigma: Y \to K(\ZZ, 3)$.\\
(2) With a fixed isomorphism $\cH\otimes \cH \to \cH$, there is a  homomorphism
\[
K^i(X, \sigma_1) \times K^j(X, \sigma_2)  \longrightarrow 
K^{i+j} (X, \sigma_1 +\sigma_2)
\]
for any two twistings $\sigma_1,    \sigma_2:  X\to K(\ZZ, 3)$.   \\
(3)
  If $X$ is covered by two closed  subsets $X_1$ and $X_2$,
    there is a Mayer-Vietoris  exact sequence
 \[
 \xymatrix{
K^1 (X_1,\sigma_1 )\oplus K^1( X_2,\sigma_2 )\ar[r]& K^1  (X_1\cap X_2, \sigma_{12})\ar[r]&   K^0 (X, \sigma)
  \ar[d]\\
K^1 (X, \sigma) \ar[u] & K^0  (X_1\cap X_2,\sigma_{12} ) \ar[l]&
 K^0  (X_1, \sigma_1) \oplus K^0(X_2,\sigma_2 )     \ar[l]
   }
 \]
   where $\sigma_1$, $\sigma_2$ and $\sigma_{12}$ are the restrictions of $\sigma: X\to K( \ZZ, 3)$  to
   $X_1$, $X_2$ and $X_1\cap X_2$ respectively.\\
(4)
  If $X$ is covered by two  open subsets $U_1$ and $U_2$,
    there is a Mayer-Vietoris  exact sequence
 \[
 \xymatrix{
 K^0 (X, \sigma)\ar[r]& K^1  (U_1\cap U_2, \sigma_{12})\ar[r]& K^1 (U_1,\sigma_1 )\oplus K^1( U_2,\sigma_2 )
  \ar[d]\\
 K^0  (U_1, \sigma_1) \oplus K^0(U_2,\sigma_2 )\ar[u] & K^0  (U_1\cap U_2,\sigma_{12} ) \ar[l]&
   K^1 (X, \sigma) \ar[l]
   }
 \]
   where $\sigma_1$, $\sigma_2$ and $\sigma_{12}$ are the restrictions of $\sigma: X\to K( \ZZ, 3)$ to
   $U_1$, $U_2$ and $U_1\cap U_2$ respectively.\\
(5) ({\bf Thom isomorphism})  Let $\pi: E\to X$ be an oriented real vector bundle of rank $k$  over $X$ with the 
 classifying map
  denoted by $\nu_E: X \to \BSO (k)$, then there is a canonical isomorphism, for any twisting $\a: X\to K(\ZZ, 3)$,
 \ba\label{Thom:CW}
 K^{ev/odd}(X, \a + W_3 \circ \nu_E  )  \cong K^{ev/odd} (E, \a \circ  \pi),
 \na
with the grading shifted by $k (mod \ 2 )$. 
 Here  $W_3: \BSO (k) \to  K(\ZZ, 3)$ is   the classifying map of the principal 
$K(\ZZ, 2)$-bundle $\BSpin^c (k) \to \BSO(k) $. \\
(6) For any differentiable map $f: X\to Y$ between two smooth manifolds $X$ and $Y$, there
  is a natural push-forward map
  \ba\label{push:forward}
  f^K_!:   K^i \bigl(X, f^*\sigma +   W_3\circ \nu_f \bigr) \longrightarrow K^{i+ d(f)}(Y, \sigma),
  \na
  for any $\sigma\in H^3(Y, \ZZ)$, $d(f)= \dim (X)-\dim (Y) \mod  2 $, and
  $ \nu_f : X \to \BSO$ is a classifying map of $TX \oplus f^*TY$.  Note that
   $W_3\circ \nu_f $ is a map $X\to K(\ZZ, 3) $ representing   the image of $w_2(X) +  f^*(w_2(Y)) \in H^2(X, \ZZ_2)$
   under the Bockstein homomorphism
 $
 H^2(X, \ZZ_2) \longrightarrow H^3(X, \ZZ).
  $

 \subsection{Geometry of bundle gerbes}

 In this subsection, we recall some basics of bundle gerbes and their connections and curvings from \cite{Mur,MurSte}. First recall that associated to the central extension
\ba\label{cen:ext}
1\to U(1) \longrightarrow U(\cH) \longrightarrow \PU(\cH) \to 1.
\na
there exists a so-called lifting bundle gerbe
 $\cG_\sigma$ \cite{Mur}. This object is represented diagrammatically by
\ba\label{bg:sigma}
\xymatrix{
 \cG_\sigma \ar[d]& \\
 \cP_\sigma  ^{[2]}\ar@< 2pt>[r]^{\pi_1} \ar@< -2pt>[r]_{\pi_2}
  & \cP_\sigma    \ar[d]^{\pi} \\
   &X}.
\na
This diagram should be read as follows.
Starting with $\pi:  \cP_\sigma\to X$ form the fibre product $\cP_\sigma  ^{[2]}$
which is a  groupoid
$\cP_\sigma^{[2]} = \cP_\sigma \times_X \cP_\sigma$ with 
source and range maps $\pi_1: (y_1, y_2) \mapsto y_1$ and $\pi_2: (y_1, y_2)\mapsto y_2$. There is an obvious map from each fiber of $\cP_\sigma^{[2]}$ to $\PU(\cH)$ and so we can define
the fiber of  $\cG_\sigma$ over a point in $\cP_\sigma^{[2]}$ by pulling back the 
fibration (\ref{cen:ext}) using this map. 
This endows $\cG_\sigma$ with  a groupoid 
structure (from the multiplication in $U(\cH)$) and in fact it is a $U(1)$-groupoid extension of $\cP_\sigma  ^{[2]}$.

We will review
the notion of a bundle gerbe connection, curving and
curvature on $\cG_\sigma$.   Recall that there is an exact sequence of
differential $p$-forms
\ba\label{exact}
\xymatrix{
\Omega^p(X) \ar[r]^{\pi^*} & \Omega^p(\cP_\sigma) \ar[r]^\delta &\Omega^p(\cP^{[2]}_\sigma) \ar[r]^\delta & \cdots, }
\na
Here $\cP_\sigma^{[q]}$ is the $q$-fold fiber product of $\pi$,   $\delta=\sum_{i=1}^{q+1}(-1)^i \pi^*_i:
\Omega^p(\cP_\sigma^{[q]}) \to
\Omega^p(\cP_\sigma^{[q+1]}) $  is the alternating sum of pull-backs 
 of projections,
where $\pi_i$  is the projection map which omits the $i$-th point in the fibre product.

 A gerbe  connection on $\cP_\sigma$ is a unitary connection
$\theta$ on the principal $U(1)$-bundle $\cG_\sigma$ over 
$\cP_\sigma$ which  commutes with the bundle gerbe product.    
A bundle gerbe connection $\theta$ has curvature
\[
F_\theta \in \Omega^2(\cP_\sigma)
\]
 satisfying $\delta (F_\theta) = 0$
  and hence from the
exact sequence (\ref{exact}) for $p=2$,  there exists a two-form $\omega$  on  $\cP_{\sigma}$ such that
\[
F_\theta = \pi_2^* (\omega) -\pi_1^* (\omega).
\]
Such an $\omega$  is called a curving for the gerbe connection $\theta$. The choice of a curving is not unique,
  the ambiguity in the choice is precisely the addition of the pull-back 
  to $\cP_{\sigma}$ of
a two-form on  $X$.   Given a choice of curving $\omega$ we  have
\[
\delta (d\omega ) = d\delta (\omega ) = d F_\theta  = 0
\]
so that we can find   a unique closed  three-form on $\beta $ on $X$, such that $d\omega  = \pi^*  \beta $.    We  denote by 
$\check{\sigma} = (\cG_\sigma, \theta, \omega)$
 the lifting bundle  gerbe $\cG_\sigma$ with the connection $\theta$ and  a curving $\omega$. 
 Moreover
 $H= \dfrac{\beta}{2\pi \sqrt{-1}}$  is a de Rham representative for
the Dixmier-Douady class $[\sigma]$. We
 call  $H$  the normalized curvature of  $\check{\sigma}$.

Consider a good open cover $\cU = \{U_i\}$ of $X$
such that $\cP_\sigma \to X$ has  trivializing sections $\phi_i$ over each $U_i$ with transition functions
$g_{ij}:  U_i \cap U_j \longrightarrow \PU(\cH)$
satisfying  $\phi_j = \phi_i g_{ij}$ over $U_i\cap U_j$. 
Define $\sigma_{ijk}$ by   $\hat{g}_{ij} \hat{g}_{jk}= \hat{g}_{ik}\sigma_{ijk} $ for 
 a lift of $g_{ij}$ to $\hat{g}_{ij}:  U_i \cap U_j \to \U (\cH) $.
 Note that the pair $(\phi_i, \phi_j)$  defines a section of $\cP^{[2]}_{\sigma}$
over $U_i \cap U_j$. 
  The connection $\theta$ can be pulled back by $(\phi_i, \phi_j)$
to define  a 1-form
$A_{ij}$ on $U_i\cap  U_j$ and the curving $\omega$ can be pulled-back by the $\phi_i$
 to define two-forms $B_i$ on $U_i$. Then
the triple
\ba\label{deligne:cocycle}
(\sigma_{ijk}, A_{ij}, B_i)
\na
 is a   degree two smooth Deligne cocycle. 
 
 In our construction of the Chern character for differential twisted K-theory and in our proof of the Riemann-Roch theorem  for twisted K-theory we need to work with the Deligne data defined by  $\check{\sigma}$. So we 
  recall for the reader's convenience  that  degree $p$ smooth Deligne cohomology
 is the $p$-th  \v{C}ech  hypercohomology group of the complex of sheaves on $X$ (Cf. \cite{Bry}):
  \[
  \underline{U(1)}  \stackrel{d \text{log}} {\longrightarrow} \Omega^1_X
    \stackrel{d} {\longrightarrow}  \Omega^2_X \longrightarrow \cdots 
    \stackrel{d} {\longrightarrow} \Omega^p_X
  \]
  where $\underline{U(1)}$ is the sheaf of germs of smooth $U(1)$-valued functions
on $M$ and
  $\Omega^p_X$  is the sheaf of germs of  imaginary-valued differential $p$-forms on $X$.   
 A degree 0  smooth   Deligne class is  represented by a smooth map $f \colon X \to U(1)$.
A degree 1  smooth Deligne class $\xi$ can be represented by  
  Hermitian line bundles with Hermitian  connection.   

 The degree 2 Deligne cohomology group $H^2_\cD(X)$
  can be calculated as the cohomology of the total complex of the double complex with respect to
  a good cover $\cU=\{U_i\}$  of $X$:
  \ba\label{Del}
  \xymatrix{
  \vdots & \vdots & \vdots   \\
  C^2(\cU, \underline{U(1)} ) \ar[r]^{d\log} \ar[u]^{\delta} &  C^2(\cU,  \Omega^1_X) 
  \ar[r]^{d } \ar[u]^{\delta} &  C^2(\cU,  \Omega^2_X)    \ar[u]^{\delta}  \\
   C^1(\cU, \underline{U(1)} ) \ar[r]^{d\log} \ar[u]^{\delta} &  C^1(\cU,  \Omega^1_X) 
  \ar[r]^{d } \ar[u]^{\delta} &  C^1(\cU,  \Omega^2_X)  \ar[u]^{\delta}    \\
 C^0(\cU, \underline{U(1)} ) \ar[r]^{d\log} \ar[u]^{\delta} &  C^0(\cU,  \Omega^1_X) 
  \ar[r]^{d } \ar[u]^{\delta} &  C^0(\cU,  \Omega^2_X)    \ar[u]^{\delta}  }   
    \na
which is the quotient of  the abelian group of degree two Deligne cocycles by  the subgroup of degree two Deligne coboundaries.  Here 
a triple 
\[
(\sigma_{ijk}, A_{ij}, B_i) 
\in   C^2(\cU, \underline{U(1)} ) \oplus C^1(\cU,  \Omega^1_X) \oplus  C^0(\cU,  \Omega^2_X) 
\]
   is a  Deligne cocycle if it satisfies the following cocycle  condition
\begin{enumerate}
\item $ \sigma_{ijk}\sigma_{ijl}^{-1} \sigma_{ikl} \sigma_{jkl}^{-1} =1.$
\item $A_{ij} + A_{jk}  + A_{ki} = d \log (\sigma_{ijk})$.\item   $B_j - B_i=  d A_{ij}.$
\end{enumerate} 
 A degree two Deligne coboundary is a triple of the following type
 \[
 (h_{ij} h^{-1}_{ik} h_{jk}, d\log (h_{ij}) +a_i -a_j, da_i) 
 \]
for $(h_{ij}, a_i)  \in C^1(\cU, \underline{U(1)} ) \oplus  C^0(\cU,  \Omega^1_X) $. 
The degree 2  Deligne cohomology of $X$ classifies   stable isomorphism classes 
  of   Hermitian bundle gerbes  with connection and curving (see \cite{MurSte}).

We conclude this subsection by noting 
an important consequence of the above discussion 
 namely that the degree two Deligne class of  (\ref{deligne:cocycle})   determined
by  $\check{\sigma} =\{ \cG_\sigma, \theta, \omega)$
 is independent of the choice of locally trivializing sections  of $\cP_\sigma$ and the lifting $\hat{g}_{ij}$.

\section{Differential twisted K-theory}

Recall our data:  $X$ is a compact smooth  manifold, $\cP_\sigma$ is  a  principal
$\PU(\cH)$ bundle over $X$ whose  classifying map is given by 
$\sigma: X\longrightarrow K(\ZZ, 3)$.  There is a lifting bundle gerbe $\cG_\sigma$ with a bundle gerbe
  connection $\theta$  and a  curving $\omega$ with $  \check{\sigma} = (\cG_\sigma, \theta, \omega) $ denoting this triple.
We refer to $ \check{\sigma}$  as a twisting in differential twisted K-theory.
The normalized  curvature 
   of  the bundle gerbe with connection and curving $  \check{\sigma}$ is denoted by $H$. With respect to an  open  cover $\{U_i\}$ of $X$ which
   trivializes $\cP_\sigma$ using local sections $\{\phi_i\}$, 
  we have an associated 
Deligne  2-cocycle  $ (\sigma_{ijk}, A_{ij}, B_i). $

We will  establish in the next  subsection the existence of a differential twisted K-theory and  construct   its differential twisted Chern character which induces  the  twisted Chern character
$ch_{\check{\sigma}}  : K^*(X, \check{\sigma})   \longrightarrow    H^*(X, d-H).
$

\subsection{The geometric model of  differential twisted K-theory}

 We begin with odd twisted K-theory.  Let $f : \cP_\sigma \to \Fred_*^{sa}$ be a $\PU(\cH)$
equivariant map to the bounded self-adjoint Fredholm operators with both
positive and negative essential spectrum, i.e., the homotopy
class of $f$ is an element of $K^1(X,\sigma).$ We make the additional
assumption that the operators $f (x)$ have discrete spectrum (which is the case in
many physics examples). This is no real limitation, since by \cite{ASin}  the space $\Fred_*^{sa}$ 
is homotopy equivalent to the subspace ${\mathcal F}_*^{sa}$ of operators of norm less than or equal to one and
with essential spectrum $\pm 1.$ Moreover the map from unbounded to bounded Fredholms
$D\to D(1+D^2)^{-1/2}$ introduced earlier maps the unbounded self adjoint operators with discrete spectrum into the space ${\mathcal F}_*^{sa}$. 

Next choose an open cover $\{U_i\},$  $i=1,2,\dots, n,$ of $X$ such that on each $U_i$  there is a local section 
$\phi_i: U_i \to \cP_\sigma$ and for each $i$ there is a real number $\lambda_i$ not in the spectrum of the operators
$f(\phi_i(x))$, for all $x\in U_i$.   We refer to  $\lambda_i$ as a choice of {\it spectral cut} for the family
  $\{ f(\phi_i(x))\}_{x\in U_i}.$ Furthermore, we can require that in each interval $(\lambda_i, \lambda_j)$ there are
only finite number of eigenvalues and each with finite multiplicity using the model ${\mathcal F}^{sa}_*$
above and selecting the $\lambda_i$'s in the open interval $(-1,1).$ 
Then  over each $U_{ij}=U_i\cap U_j$ we have a finite rank vector bundle $E_{ij}$ spanned
by the eigenvectors of $f(\phi_j(x))$ with eigenvalues in the open interval
$(\lambda_i, \lambda_j)$, with a chosen ordering $\lambda_i < \lambda_j$ for $i<j.$

Let the  transition functions be denoted by 
$g_{ij} : U_{ij} \to \PU(\cH)$  
such that  $\phi_j(x)= \phi_i(x) g_{ij}(x)$.
If $\hat{g}_{ij}$ is a lift of $g_{ij}$
to $\U(\cH)$ then
$\hat{g}_{ij} \hat{g}_{jk} \hat{g}_{ki} =\sigma_{ijk}  Id$
on triple intersections  $U_{ijk} = U_i \cap U_j\cap U_k$ with $\sigma_{ijk}$ taking values in $U(1).$

Next we define bundle maps $\phi_{ijk}: E_{ij}\to E_{ik}$   
over $U_{ijk} $ for $\lambda_i<\lambda_j<\lambda_k$ as follows. Note that  $f(\phi_j(x)) = \hat{g}_{kj}^{-1}  f(\phi_k(x))   \hat{g}_{kj}$.
First act by $\hat{g}_{kj}$ on $E_{ij}$ and then use the inclusion to $E_{ik}.$  
The vector bundle $\phi_{ijk}(E_{ij})$ can be 
identified as the tensor product $L_{kj} \otimes E_{ij}$ where the complex line bundle $L_{kj}$ over 
$U_{kj}$ comes from the lifting bundle gerbe, i.e., from the pull-back of the central extension of $\PU(\cH)$ with respect 
to the map $g_{kj}: U_{jk} \to \PU(\cH).$  Thus we have
\ba\label{twisted:bundle}
 L_{kj} \otimes E_{ij} \oplus E_{jk} = E_{ik}, 
 \na
 over $U_{ijk}$. We call this the twisted cocycle property of families of local
 vector bundles $\{E_{ij}\}$.

In the untwisted case we can identify $E_{ji}$ as the virtual bundle $-E_{ij}$.  In the twisted case 
we have to remember that the vector bundles are defined using the local sections attached to the 
second index. Therefore we identify 
 $E_{ji} $ with $- L_{ji} \otimes E_{ij}$ 
the twist coming again from the transition function $g_{ji}$ relating the local sections on open 
sets $U_i, U_j.$ 
 
Since the vector bundles $E_{ij}$ are defined via projections
$P_{ij}$  onto finite dimensional subspaces  $P_{ij} \cH$, they come equipped with a natural
connection $\nabla_{ij}$ and we can extend the above equality (\ref{twisted:bundle}) to
\ba
\label{twisted:connection}
 (L_{kj}, A_{kj}) \otimes (E_{ij}, \nabla_{ij}) \oplus (E_{jk}, \nabla_{jk}) = (E_{ik}, \nabla_{ik})
\na
where $ A_{ij}$ is the gerbe connection  determined by  the differential twisting
$\check{\sigma} = (\cG_\sigma, \theta, \omega)$.   For simplicity,
we normalize $B_i$ such that the first Chern class $c_{ij}$ of  $L_{ij}$ is represented
by $B_j-B_i$.

\begin{remark} In the case of a trivial $\PU(\cH)$ bundle one has
a choice of lifts such that  $\sigma_{ijk}=1$
and actually one has a global family of Fredholm operators parametrized
by points on $U_{ij}$.  In the untwisted  case,  the spectral subspaces $E_{ij}$ are directly
parametrized by points in $X$  and we have $E_{ij}\oplus E_{jk}= E_{ik} $ over $U_{ijk}$.
\end{remark}

\begin{remark} There is an inverse map from the twisted cocycle of local vector bundles to a global 
 object in $K^1(X, \sigma).$ To make the construction simple, we use the alternative classifying 
 space $\U_1(\cH)$ of unitary operators which differ from the unit by a trace-class operator.
 Any vector bundle $E_{ij}$ can be defined  using a projection valued map $P_{ij}: U_{ij} \to L(\cH).$ 
 We  assume that the projections $P_{ij}(x)$  commute if the second indices are equal. (This is 
 automatically the case when the projections are defined from a twisted K-theory element as
 above.) 
  On the overlap $U_{ijk}$ we require
\ba\label{coc}
 \hat g_{jk}^{-1}  P_{ij} \hat g_{jk} + P_{jk} = P_{ik}, 
 \na
this being the twisted cocycle property (\ref{twisted:bundle})  of the vector bundles $\{E_{ij}\}.$ \
 We put 
 $$ g_i(x) = e^{2\pi i \sum_j \rho_j P_{ji}((x)}$$ 
 where $\sum \rho_i(x) =1$ is a partition of unity subordinate to the open cover $\{U_i\}.$ The function 
$g_i : U_i \longrightarrow \U_1(\cH),$ satisfies   
 $$  \hat g^{-1}_{ji} (\log\, g_j)  \hat g_{ji} =  2\pi i\, \hat g^{-1}_{ji} \sum_{k} \rho_{k} P_{kj} \hat g_{ji}, 
 = \log( g_i) - 2\pi i P_{ji}  $$ 
 by (\ref{coc}), 
 which implies 
 $$ \hat g^{-1}_{ji} g_j \hat g_{ji} = g_i e^{-2\pi i P_{ji} } = g_i$$ 
 on the overlap $U_{ij}.$  Hence, $\{g_i\}$ defines an element in  $K^1(X, \sigma).$
\end{remark}

Let $\ell_{ij}$ be the top exterior power of $E_{ij}$ and $n_{ij}$ the rank of $E_{ij}.$ Then the collection of 
integers $\{n_{ij}\}$ is an integral \v{C}ech cocycle. Furthermore, 
\ba\label{twistgerbe} L_{kj}^{n_{ij} }\otimes \ell_{ij} \otimes \ell_{jk} = \ell_{ik}.\na 
As noted in   \cite{MicPel}  in the case when  $n_{ij}$ is trivial, i.e., $n_{ij} = n_i -n_j$ for some 
locally constant integer valued functions  $n_i$,  one can define $\ell'_{ij} = \ell_{ij} \otimes L_{ij}^{n_j}$ and
one has 
$$ \ell'_{ij} \otimes \ell'_{jk} = \ell'_{ik}.$$ 
This gerbe is however defined only modulo integer powers of $L$ since one can always   shift 
$n_i \mapsto n_i + n$ for a constant $n.$ 

In any  case, we obtain from (\ref{twisted:connection})
  the cocycle relation for the Chern character forms of the vector bundles involved,
\ba
\label{cocycle:form}
 e^{B_j-B_k} \omega_{ij} + \omega_{jk} = \omega_{ik},
\na
where $\omega_{ij} = ch(E_{ij}, \nabla_{ij}) $ is the Chern character form of $(E_{ij}, \nabla_{ij})$.
  
Denote by  $\bar{\omega}_{ij} $ the even differential form $
e^{B_j}\omega_{ij}.$ We have then
\[
\delta(\bar\omega_{ij}) = \bar\omega_{jk} -\bar \omega_{ik} + \bar\omega_{ij} =0,
\qquad (d-H)\bar\omega_{ij} =0.
\]

Applying the tic-tac-toe  argument  to the following twisted \v{C}ech-de Rham double complex,
\ba\label{tic-tac}
\xymatrix{&&&  \\
     \Omega^{even}(\{U_{ijk}\})\ar[u]^{\delta}\ar[r]^{d-H} &
     \Omega^{odd}(\{U_{ijk}\})\ar[u]^{\delta}\ar[r]^{d-H} & \Omega^{even}(\{U_{ijk}\})\ar[u]^{\delta}\ar[r]^{\qquad d-H} &  \\
    \Omega^{even}(\{U_{ij}\})\ar[u]^{\delta}\ar[r]^{d-H} &
     \Omega^{odd}(\{U_{ij}\})\ar[u]^{\delta}\ar[r]^{d-H} & \Omega^{even}(\{U_{ij}\})\ar[u]^{\delta}\ar[r]^{\qquad d-H} & \\   
      \Omega^{even}(\{U_{i}\})\ar[u]^{\delta}\ar[r]^{d-H} &
     \Omega^{odd}(\{U_{i}\})\ar[u]^{\delta}\ar[r]^{d-H} & \Omega^{even}(\{U_{i}\})\ar[u]^{\delta}\ar[r]^{\qquad d-H} &  \\ 
      \Omega^{even}(X)\ar[u]^{\delta}\ar[r]^{d-H} &
     \Omega^{odd}(X)\ar[u]^{\delta}\ar[r]^{d-H} & \Omega^{even}(X)\ar[u]^{\delta}\ar[r]^{\qquad d-H} &      }
  \na
we obtain an odd-degree  $(d-H)$-closed differential form $\Theta$ via the following diagram chase:
\ba\label{Theta}
 \xymatrix{
     0 &
      &   &  \\
    \bar\omega_{ij} \ar[u]^{\delta}\ar[r]^{d-H } &
       0  &  & \\   
     \eta_i \ar[u]^{\delta}\ar[r]^{ }&
    (d-H)\eta_i \ar[u]^{\delta}\ar[r]^{  }&  0  &  \\ 
      &
    \Theta\ar[u]^{\delta}\ar[r]^{d-H} & 0\ar[u]^{\delta}&      }
  \na
  with respect to  an  open  cover $\{U_i\}. $  Specifically, we can choose
  \[
  \eta_i =\sum_k \rho_k   \bar\omega_{ki} 
  \]
satisfying  $\delta(\eta_i) =    \bar\omega_{ij}$  by direct calculation. Applying $d-H$ to $  \eta_i$
and diagram chasing, we have a locally defined odd differential form
$(d-H)\eta_i$ on each $U_i$ such that 
\[
(d-H)\eta_i  = (d-H)\eta_j
\] over $U_{ij}$, hence we obtain
a globally defined  odd differential form $\Theta$  such that
\[
\Theta |_{U_i} =  (d-H)\eta_i,
\qquad (d-H) \Theta =0.
\]
This globally defined  odd differential form $\Theta$ is     denoted by 
\[
ch_{\check\sigma} (f, \{\lambda_i\}, \{\rho_i\} ),
\]
and is   called  the twisted differential  Chern character of $f$.
Clearly it can depend on the choice of spectral cut $\{\lambda_i\}$ of $\{f\circ \phi_i \}$ associated to  local
trivializing sections $\{ \phi_i: U_i \to \cP_\sigma\}$, and on a choice of a partition of unity $\{\rho_i\}$ on $X$ subordinate  to $\{U_i\}$.

The next proposition explains how  $ch_{\check\sigma} (f, \{\lambda_i\}, \{\rho_i\} )$ 
changes if we change the choice of spectral cuts $\{\lambda_i\}$,  or the partition of unity $ \{\rho_i\}$ on $X$.  We can also see the dependence on  $\PU(\cH)$-equivariant homotopies.

\begin{proposition}  \label{equivalence} \begin{enumerate}
\item  Choose local trivializing sections  $\{ \phi_i: U_i \to \cP_\sigma\}$ and let $\{\lambda_i\}$ be a family of 
spectral cuts  for  $\{f\circ \phi_i \}$.
Now let  $\{\rho_i\}$ and $\{\rho'_i\}$ be a pair of 
  partitions  of unity on $X$ subordinate  to $\{U_i\}$, then there exists an even differential form $\eta (f, \{ \lambda_i\}, \{\rho_i\}, \{\rho'_i\} ) $ on $X$ such that 
\[
ch_{\check\sigma} (f, \{\lambda_i\}, \{\rho_i\} ) - ch_{\check\sigma} (f, \{\mu_i\}, \{\rho'_i\} ) = 
(d-H)\eta (f, \{ \lambda_i\}, \{\rho_i\}, \{\rho'_i\}).\]
\item Let  $\{\lambda_i <\mu_i\}$ be a pair of spectral cuts    for the family $\{f\circ\phi_i\}$ on
an open cover $\{U_i\}$ of $X$. Then there exists an even differential form $\eta (f, 
\{ \lambda_i\}, \{\mu_i\}, \{\rho_i\} ) $ on $X$ such that 
\[
ch_{\check\sigma} (f, \{\lambda_i\}, \{\rho_i\} ) - ch_{\check\sigma} (f, \{\mu_i\}, \{\rho_i\} ) = 
(d-H)\eta (f, \{ \lambda_i\}, \{\mu_i\}, \{\rho_i\}).\]
\item Let $f: P_{\sigma} \times [t_0, t_1] \to \Fred_*^{sa}$ be a $\PU(\cH)$-equivariant homotopy connecting $f_0 = f(\cdot, t_0)$ and  $f_1 = f(\cdot, t_1) $ such that there is a family of spectral cuts
$\{\lambda_i\}$   independent of $t\in [t_0,t_1]$ for $\{f( \phi_i,t)\}$ on
an open cover $\{U_i\times [t_0,t_1]\}$ of $X\times [t_0,t_1]$. Then there exists an even differential form $\eta (f_0, f_1, \{\lambda_i\},  \{\rho_i\}) $ on $X$ such that 
\[
ch_{\check\sigma} (f_0, \{\lambda_i\}, \{\rho_i\} ) - ch_{\check\sigma} (f_1, \{\lambda_i\}, \{\rho_i\} ) = 
(d-H)\eta (f_0, f_1,  \{\lambda_i\},  \{\rho_i\}).
\]
\end{enumerate} 
\end{proposition}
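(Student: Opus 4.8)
My plan is to handle the three changes uniformly by tracking how the twisted \v{C}ech--de Rham data feeding the tic-tac-toe (\ref{tic-tac})--(\ref{Theta}) is modified, keeping the cover $\{U_i\}$, the sections $\{\phi_i\}$ and the lifts $\hat g_{ij}$ fixed throughout (in part (3) they are replaced by $\{U_i\times[t_0,t_1]\}$, $\{\phi_i\times\mathrm{id}\}$ and the pulled-back lifts). The recurring mechanism is: if the $1$-cochain $\bar\omega_{ij}=e^{B_j}ch(E_{ij},\nabla_{ij})$ changes by the coboundary $\delta$ of a \v{C}ech $0$-cochain of $(d-H)$-closed even forms that is defined cover-wise but becomes a genuine global form after multiplying by the partition of unity, then $\Theta=ch_{\check\sigma}(f,\{\lambda_i\},\{\rho_i\})$ changes by a $(d-H)$-exact form with an explicit primitive. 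Part (1) is the degenerate instance of this principle: here $E_{ij},\nabla_{ij},\bar\omega_{ij}$ are untouched and only the primitive $\eta_i=\sum_k\rho_k\bar\omega_{ki}$ changes, to $\eta_i'=\sum_k\rho_k'\bar\omega_{ki}$; since $\sum_k(\rho_k-\rho_k')=0$ and $\delta\bar\omega=0$ one gets $\delta(\eta_i-\eta_i')=0$, so $\{\eta_i-\eta_i'\}$ patches to a global even form $\mu$, and applying $d-H$ over each $U_i$ gives the statement with $\eta(f,\{\lambda_i\},\{\rho_i\},\{\rho_i'\})=\mu$.

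For part (2), which I expect to be the crux, I would form on each $U_i$ the finite rank bundle $F_i$ of eigenvectors of $f(\phi_i(x))$ with eigenvalue in $(\lambda_i,\mu_i)$ (a legitimate finite-rank bundle since $\lambda_i<\mu_i$ and both are spectral cuts), with its natural spectral-projection connection $\nabla_{F_i}$, and set $\bar\mu_i=e^{B_i}ch(F_i,\nabla_{F_i})$; because $F_i$ is an ordinary bundle-with-connection, the very computation that gives $(d-H)\bar\omega_{ij}=0$ gives $(d-H)\bar\mu_i=0$. The plan is then to show that passing from $\{\lambda_i\}$ to $\{\mu_i\}$ changes $\bar\omega_{ij}$ by the coboundary $(\delta\bar\mu)_{ij}$. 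For $i<j$ this should come from the elementary identity of spectral projections of $f(\phi_j(x))$, $P_{(\mu_i,\mu_j)}+P_{(\lambda_i,\mu_i)}=P_{(\lambda_i,\lambda_j)}+P_{(\lambda_j,\mu_j)}$, in which the two intervals on each side are disjoint because $\lambda_i<\mu_i$: it exhibits the new bundle $E_{ij}'$ through an orthogonal direct-sum relation over $U_{ij}$ involving $F_j$ and the spectral subspace of $f(\phi_j(x))$ on $(\lambda_i,\mu_i)$, and the latter is a gerbe twist of $F_i$ by exactly the mechanism that produces $E_{ji}=-L_{ji}\otimes E_{ij}$. Passing to Chern character forms and matching the gerbe normalizations (the $c_1$ of the line bundles being $B_\bullet-B_\bullet$) should make the $e^{B}$-factors cancel and yield $\bar\omega_{ij}'=\bar\omega_{ij}+(\delta\bar\mu)_{ij}$. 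Feeding this into $\eta_i$ gives $\eta_i'=\eta_i+\bar\mu_i-\nu$ with $\nu=\sum_k\rho_k\bar\mu_k\in\Omega^{even}(X)$ global, and $d-H$ (together with $(d-H)\bar\mu_i=0$) produces $ch_{\check\sigma}(f,\{\lambda_i\},\{\rho_i\})-ch_{\check\sigma}(f,\{\mu_i\},\{\rho_i\})=(d-H)\nu$, so $\eta(f,\{\lambda_i\},\{\mu_i\},\{\rho_i\})=\sum_k\rho_k\,e^{B_k}ch(F_k,\nabla_{F_k})$.

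For part (3) I would run the entire construction over $X\times[t_0,t_1]$ with the cover $\{U_i\times[t_0,t_1]\}$, the sections $\phi_i\times\mathrm{id}$, the $t$-independent cuts $\{\lambda_i\}$, and the pullbacks along $\pi\colon X\times[t_0,t_1]\to X$ of the curvings $B_i$ and of the partition of unity. Since $\lambda_i,\lambda_j$ avoid the spectrum for every $t$, the spectral projections vary continuously and give finite rank bundles $E_{ij}^{I}$ over $U_{ij}\times[t_0,t_1]$ whose restriction to a slice $X\times\{t_s\}$ is the spectral bundle of $f_s$; since restriction to a slice commutes with $d$ and with multiplication by $\pi^*H$, the resulting global $(d-\pi^*H)$-closed odd form $\Theta^{I}$ satisfies $\Theta^{I}|_{X\times\{t_s\}}=ch_{\check\sigma}(f_s,\{\lambda_i\},\{\rho_i\})$ for $s=0,1$. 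Finally, because $\iota_{\partial_t}\pi^*H=0$, the projection formula together with Stokes' theorem for fibre integration gives $(d-H)\int_{[t_0,t_1]}\Theta^{I}=\pm\big(\Theta^{I}|_{X\times\{t_0\}}-\Theta^{I}|_{X\times\{t_1\}}\big)$, so $\eta(f_0,f_1,\{\lambda_i\},\{\rho_i\})=\pm\int_{[t_0,t_1]}\Theta^{I}$ works. The main obstacle is the twisted-cocycle bookkeeping in part (2): making precise that the difference bundles $\{F_i\}$ assemble into a $(d-H)$-closed \v{C}ech $0$-cochain whose coboundary is exactly the change of $\{\bar\omega_{ij}\}$, with the line-bundle twists and the sign conventions for the $E_{ab}$ with $a>b$ correctly matched so that all the $e^{B}$-factors cancel; parts (1) and (3) are then routine diagram chasing and a fibre-integration argument.
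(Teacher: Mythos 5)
Your proposal is correct and follows essentially the same route as the paper's proof: part (1) is the same partition-of-unity difference computation, part (2) hinges on exactly the paper's direct-sum relation $F_{ij}\oplus L_{ji}\otimes E_i = E_{ij}\oplus E_j$ over $U_{ij}$ (your $F_i$ is the paper's spectral-interval bundle $E_i$), yielding the same primitive $\eta=\sum_i\rho_i e^{B_i}ch(E_i,\nabla_{E_i})$, and part (3) is the Chern--Weil/transgression argument the paper invokes, with $\eta=\int_{t_0}^{t_1}ch_{\check\sigma}(f,\{\lambda_i\},\{\rho_i\})$.
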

\begin{proof}    \begin{enumerate}
\item From the definition of  the  twisted differential  Chern character, we have
\[
ch_{\check\sigma} (f, \{\lambda_i\}, \{\rho_i\} )|_{U_j} = \sum_i (d-H) \rho_i \wedge e^{B_j}   ch(E_{ij}, \nabla_{ij})
\]
for the spectral cut  $\{\lambda_i\}$ and a partition of unity $ \{\rho_i\}$, 
and 
\[
ch_{\check\sigma} (f, \{\lambda_i\}, \{\rho'_i\} )|_{U_j} = \sum_i (d-H) \rho'_i  \wedge  e^{B_j} ch(E_{ij}, \nabla_{ij})
\]
for the spectral cut  $\{\lambda_i\}$ and a partition of unity $ \{\rho'_i\}$. Their  difference is given by
\[
\sum_i (d-H)   \big( (\rho_i- \rho'_i)  e^{B_j} ch(E_{ij}, \nabla_{ij})\big). 
\]
Note that $\sum_i    \big( (\rho_i- \rho'_i)  e^{B_j} ch(E_{ij}, \nabla_{ij})\big) $  is a globally defined  even
differential form  denoted by $\eta (f, \{ \lambda_i\}, \{\rho_i\}, \{\rho'_i\} ) $, as 
\[
\sum_i    \big( (\rho_i- \rho'_i)  e^{B_j} ch(E_{ij}, \nabla_{ij})\big)  
= \sum_i    \big( (\rho_i- \rho'_i)  e^{B_k} ch(E_{ik}, \nabla_{ik})\big) 
\]
over $U_j\cap U_k$ by the cocycle condition of $\{ e^{B_j} ch(E_{ij}, \nabla_{ij})\}$.  This verifies that
\[
ch_{\check\sigma} (f, \{\lambda_i\}, \{\rho_i\} ) - ch_{\check\sigma} (f, \{\mu_i\}, \{\rho'_i\} ) = 
(d-H)\eta (f, \{ \lambda_i\}, \{\rho_i\}, \{\rho'_i\}).\]

\item Let  $E_i $ be the vector bundle over $U_i$,  equal to the spectral subspace   defined by the  open    interval  $(\lambda_i, \mu_i)$  in the spectrum of $f(\phi_i(x))$.  Then we have 
\[
  F_{ij} \oplus  L_{ji} \otimes E_i   =   E_{ij} \oplus E_j 
  \]
on $U_{ij}.$   Equipped with the natural connections, we have
\[
 (F_{ij}, \nabla_{F_{ij}} )  \oplus  ( L_{ij}, A_{ij})  \otimes (E_i, \nabla_ {E_i}) \cong 
 (E_{ij}, \nabla_{E_{ij}} )  \oplus (E_j , \nabla_{E_j}).
\]
Hence,  
\[
 e^{B_j}  ch(F_{ij}, \nabla_{F_{ij}} )     = 
  e^{B_j}  ch(E_{ij}, \nabla_{E_{ij}} )   +  e^{B_j}  ch (E_j , \nabla_{E_j}) - 
  e^{B_i}  ch (E_i , \nabla_{E_i}).
  \]
Note that 
\begin{enumerate}
\item $ch_{\check{\sigma}} (f, \{\lambda_i\}, \{\rho_i\}  ) |_{U_j} = (d-H) \sum_i \rho_i  e^{B_j}  ch(E_{ij}, \nabla_{E_{ij}} )$.
\item $ch_{\check{\sigma}} (f, \{\mu_i\}, \{\rho_i\} ) |_{U_j} = (d-H) \sum_i \rho_i  e^{B_j}  ch(F_{ij}, \nabla_{F_{ij}} )$.
\item $ e^{B_j}  ch (E_j, \nabla_{E_j})=  \sum_i \rho_i e^{B_j}  ch (E_j , \nabla_{E_j})  $ is $d-H$ closed.
\end{enumerate}
Then the difference of the twisted differential 
Chern character form of $f$ with respect to a pair of  spectral cuts $\{\lambda_i < \mu_i\}$
is given by
\[
ch_{\check\sigma} (f, \{\lambda_i\}, \{\rho_i\}  ) - ch_{\check\sigma} (f, \{\mu_i\}, \{\rho_i\}  ) = 
(d-H)\eta (f, \{ \lambda_i\}, \{\mu_i\}, \{\rho_i\} ) 
\]
where  $\eta (f, \{ \lambda_i\}, \{\mu_i\}, \{\rho_i\}  ) =  \sum_i \rho_i e^{B_i}  ch (E_i, \nabla_{E_i})$. 
 \item Let $E_{ij}(t)$ be the finite rank vector bundle with the natural connection $\nabla(t)$  over $U_i\cap U_j$,  whose fiber at $x$ is spanned by the eigenvectors
 of $f(\phi_i(x), t)$ with eigenvalues in the open interval $(\lambda_i, \lambda_j)$. Then 
 $E_{ij}(t)$ is a continuous family of vector bundles over $U_i\cap U_j$ with natural connections
 satisfying
 \[
  (L_{kj}, A_{kj}) \otimes (E_{ij}(t), \nabla_{ij}(t) ) \oplus (E_{jk}(t), \nabla_{jk}(t)) = (E_{ik}(t), \nabla_{ik}(t)).
  \]
  Applying the 
 standard Chern-Weil argument to 
 the  odd     differential form $ch_{\check\sigma} (f, \{\lambda_i\} )$ on $M\times [t_0, t_1]$, 
 we have 
\[
 ch_{\check\sigma} (f_0, \{\lambda_i\}, \{\rho_i\} ) - ch_{\check\sigma} (f_1, \{\lambda_i\}, \{\rho_i\} ) = 
(d-H)\eta (f_0, f_1,  \{\lambda_i\},  \{\rho_i\}) 
\]
for $\eta (f_0, f_1,  \{\lambda_i\},  \{\rho_i\}) =  \disp{\int_{t_0}^{t_1}}  ch_{\check\sigma} (f,  \{\lambda_i\},  \{\rho_i\} )$. 
 \end{enumerate}
 \end{proof}

A   differential  twisted  $K^1$-cocycle   with the twisting $\check{\sigma} = (\cG_\sigma, \theta, \omega)$ given  by a lifting bundle gerbe  with connection and curving    is a quadruple 
\[
(f, \{\lambda_i\}, \{\rho_i\},  \eta)
\]
where    $f$  is a $\PU(\cH)$-equivariant  map  $\cP_\sigma \to \Fred_*^{sa}$, $\{\lambda_i\}$ is a spectral cut of $f$ with respect to an open cover $\{U_i\}$,  $\{\rho_i\}$  is a  
  partition  of unity  on $X$ subordinate  to $\{U_i\}$,    and  
   \[
   \eta \in
   \Omega^{even}(X ) /\Im (d-H),
   \]
where $\Im (d-H) $ denotes the image of  the twisted  de Rham  operator
$
d-H:  \Omega^{even}(X)\longrightarrow
\Omega^{odd}(X).
$

We introduce an equivalence relation on the set of $\check{\sigma}$-twisted differential $K^1$-cocycles,
generated by the following three elementary equivalence relations (See Proposition \ref{equivalence})
\begin{enumerate}
\item    $(f,  \{\lambda_i\},  \{\rho_i\},  \eta_0)$  and $(f, \{\lambda _i \},  \{\rho'_i\},  \eta_1)$  are  equivalent  if $\{\lambda_i\}$ is  a 
spectral cut  of $\{f\circ \phi_i \}$ associated to  local
trivializing sections $\{ \phi_i: U_i \to \cP_\sigma\}$, and  $\{\rho_i\}$ and $\{\rho'_i\}$ are  a pair of 
  partitions  of unity  on $X$ subordinate  to $\{U_i\}$,   with
\[
 \eta_0-\eta_1 + \eta (f, \{ \lambda_i\}, \{\rho_i\}, \{\rho'_i\})  = 0 \qquad \mod (d-H).
 \]
 \item  $(f,  \{\lambda_i\},  \{\rho_i\},   \eta_0)$  and $(f, \{\mu_i \},  \{\rho_i\}, \eta_1)$  are  equivalent if  
$\{\lambda_i \} $ and $\{ \mu_i\}$ are  a pair of spectral cuts    for the family of operators $\{f(\phi_i(x))\}$ on an open cover $\{U_i\}$ of $X$,  with
\[
 \eta_0-\eta_1 + \eta (f, \{ \lambda_i\}, \{\mu_i\}, \{\rho_i\}) = 0 \qquad \mod (d-H).
 \]
 \item $(f_0,  \{\lambda_i\},  \eta_0)$  and $(f_1, \{\lambda_i \},  \eta_1)$  are equivalent  if  there exists
$f: P_{\sigma} \times [t_0, t_1] \to \Fred_*^{sa}$,  a $\PU(\cH)$-equivariant homotopy connecting $f_0 = f(\cdot, t_0)$ and  $f_1 = f(\cdot, t_1) $ such that there is a family of spectral cuts
$\{\lambda_i\}$   independent of $t\in [t_0,t_1]$ for $\{f( \phi_i,t)\}$ on
an open cover $\{U_i\times [t_0,t_1]\}$ of $X\times [t_0,t_1]$, with
\[
 \eta_0-\eta_1 + \eta (f_0, f_1,  \{\lambda_i\},  \{\rho_i\})=  0 \qquad \mod (d-H).
 \] 
\end{enumerate}

\begin{definition}\label{diff:twistK1}  The differential twisted K-theory $\check{K}^1(X, \check{\sigma})$ with
a twisting  given by   $\check{\sigma}= (\cG_\sigma, \theta, \omega)$  is the space of  equivalence
classes of  differential twisted $K^1$-cocycles. The differential  Chern character  form
of a differential twisted $K^1$-cocycle $(f, \{\lambda_i\},  \{\rho_i\},  \eta)$ is given by
\ba\label{form:1}
ch_{\check{\sigma}} (f , \{\lambda_i\},  \{\rho_i\},  \eta) = ch_{\check{\sigma}} (f , \{\lambda_i\}, \{\rho_i\}) +  
(d-H) \eta.
\na
 \end{definition}

\begin{theorem} \label{twisted:1} 
 The  differential 
Chern character form  (\ref{form:1})
on  differential twisted $K^1$-cocycles    defines  the differential  Chern character  
\[
ch_{\check{\sigma}}: \check{K}^1(X, \check{\sigma}) \longrightarrow
 \Omega^{odd}(X,  d- H ), 
\]
whose image consists of  odd degree  differential forms  on  $X$,
 closed under $d-H$. The differential Chern character   
 induces a well-defined  twisted   Chern character  
 \[
 ch_{\check{\sigma}}: K^1(X,  \sigma ) \longrightarrow
 H^{odd}(X,  d- H ). 
\]
\end{theorem}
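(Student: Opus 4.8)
The plan is to prove, in order, that the form defined in (\ref{form:1}) is constant on equivalence classes of differential twisted $K^1$-cocycles, that its values are odd $(d-H)$-closed forms, and that passing to $H^{odd}(X,d-H)$ yields an invariant depending only on the underlying class in $K^1(X,\sigma)$. For the first point, since the equivalence relation on cocycles is generated by the three elementary moves, it is enough to check invariance under each. For the move fixing $f$ and $\{\lambda_i\}$ and replacing $(\{\rho_i\},\eta_0)$ by $(\{\rho'_i\},\eta_1)$, write
\[
ch_{\check\sigma}(f,\{\lambda_i\},\{\rho_i\},\eta_0)-ch_{\check\sigma}(f,\{\lambda_i\},\{\rho'_i\},\eta_1)=\bigl(ch_{\check\sigma}(f,\{\lambda_i\},\{\rho_i\})-ch_{\check\sigma}(f,\{\lambda_i\},\{\rho'_i\})\bigr)+(d-H)(\eta_0-\eta_1),
\]
insert Proposition~\ref{equivalence}(1) for the bracket, and use the defining relation $\eta_0-\eta_1+\eta(f,\{\lambda_i\},\{\rho_i\},\{\rho'_i\})\equiv 0\bmod(d-H)$ to conclude the difference vanishes; the spectral-cut move and the homotopy move are identical using parts (2) and (3) of Proposition~\ref{equivalence}. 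Hence $ch_{\check\sigma}$ descends to $\check K^1(X,\check\sigma)$.

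For the second point: by construction $ch_{\check\sigma}(f,\{\lambda_i\},\{\rho_i\})$ is the global odd form $\Theta$ obtained by the tic-tac-toe chase (\ref{tic-tac})--(\ref{Theta}), which satisfies $(d-H)\Theta=0$, and since $dH=0$ and $H\wedge H=0$ one has $(d-H)^2=0$, so $(d-H)\eta$ is also $(d-H)$-closed; thus $ch_{\check\sigma}$ takes values in the odd $(d-H)$-closed forms $\Omega^{odd}(X,d-H)$, and composing with the projection onto $H^{odd}(X,d-H)=\ker(d-H)/\Im(d-H)$ is automatic.

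For the third point, introduce the forgetful map $p\colon\check K^1(X,\check\sigma)\to K^1(X,\sigma)$ sending $(f,\{\lambda_i\},\{\rho_i\},\eta)$ to the homotopy class $[f]$; it is well defined since the first two elementary moves fix $f$ and the third is a $\PU(\cH)$-equivariant homotopy of $f$, and it is surjective because every class in $K^1(X,\sigma)$ is represented, after a $\PU(\cH)$-equivariant deformation, by a map into $\mathcal F_*^{sa}$ (hence with discrete spectrum in $(-1,1)$), for which spectral cuts $\{\lambda_i\}\subset(-1,1)$ exist after refining the cover, and one may take $\eta=0$. It then suffices to show the composite $\check K^1(X,\check\sigma)\xrightarrow{ch_{\check\sigma}}\Omega^{odd}(X,d-H)\to H^{odd}(X,d-H)$ factors through $p$: given two cocycles with $[f_0]=[f_1]$, the $(d-H)\eta$ terms vanish modulo $\Im(d-H)$, and Proposition~\ref{equivalence}(1),(2) reduce the problem (after passing to a common refinement, which changes $ch_{\check\sigma}$ only by an exact form) to comparing $ch_{\check\sigma}(f_0,\{\lambda_i\},\{\rho_i\})$ and $ch_{\check\sigma}(f_1,\{\lambda_i\},\{\rho_i\})$ across a $\PU(\cH)$-equivariant homotopy $f\colon\cP_\sigma\times[t_0,t_1]\to\mathcal F_*^{sa}$.

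The main obstacle is this last comparison, because a general homotopy need not admit a $t$-independent family of spectral cuts, so Proposition~\ref{equivalence}(3) does not apply directly. I would handle it by compactness: since $X$ is compact and each $f(\phi_i(x),t)$ has discrete spectrum in $(-1,1)$, after refining $\{U_i\}$ and subdividing $[t_0,t_1]=[s_0,s_1]\cup\dots\cup[s_{N-1},s_N]$ one can arrange a constant valid spectral cut $\lambda_i^{(k)}$ over each slab $U_i\times[s_{k-1},s_k]$; Proposition~\ref{equivalence}(3) then applies slab by slab, Proposition~\ref{equivalence}(2) reconciles the cuts $\{\lambda_i^{(k)}\}$ and $\{\lambda_i^{(k+1)}\}$ of $f(\cdot,s_k)$ at each junction, and telescoping the $(d-H)$-exact corrections gives $ch_{\check\sigma}(f_0,\dots)-ch_{\check\sigma}(f_1,\dots)\in\Im(d-H)$. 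Together with surjectivity of $p$ this produces the required map $ch_{\check\sigma}\colon K^1(X,\sigma)\to H^{odd}(X,d-H)$. The genuinely technical part is the bookkeeping in this step — ensuring successive spectral cuts stay comparable so part (2) of Proposition~\ref{equivalence} applies, and tracking the effect of cover refinement — while everything else follows formally from Proposition~\ref{equivalence} and the identity $(d-H)^2=0$.
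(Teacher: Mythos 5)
Your proposal is correct and takes essentially the same route as the paper, whose proof consists of introducing the forgetful map $\check K^1(X,\check\sigma)\to K^1(X,\sigma)$ and invoking Proposition \ref{equivalence} to obtain the commutative diagram (\ref{comm:odd}). Your additional details --- the observation that $(d-H)^2=0$, the surjectivity of the forgetful map, and the slab-by-slab subdivision argument handling equivariant homotopies that admit no globally $t$-independent spectral cut --- fill in steps the paper leaves implicit rather than constituting a different argument.
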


 \begin{proof}
  There is a natural forgetful map 
$  \check{K}^1 (X, \check{\sigma}) \longrightarrow
 K^1(X,  \sigma)$
which sends  an  equivalence class of $(f,  \{\lambda_i\}, \{ \rho_i\}, \eta)$ to
 a $\PU(\cH)$-equivariant   homotopy class of $f$.  Proposition \ref{equivalence} says that  the differential   twisted Chern character map 
 \[
 ch_{\check{\sigma}}:    \check{K}^1(X, \check{\sigma}) \longrightarrow
 \Omega^{odd} (X, d-H) 
  \]
induces the  twisted Chern character homomorphism
    \[
  Ch_{\check{\sigma}}:  K^1(X, \sigma) \longrightarrow
H^{odd} (X, H)
  \]
  such that the following  diagram  commutes
 \ba\label{comm:odd}
  \xymatrix{
    \check{K}^{1}  (X, \check{\sigma}) \ar[r]\ar[d]^{ch_{\check{\sigma}}}
      & K^1(X, \sigma)\ar[d]^{Ch_{\check{\sigma}}}   \\
  \Omega^{odd}_0(X,  d- H) \ar[r]  & H^{odd}(X, d- H),  }
\na
where $ \Omega^{odd}_0(X, d- H) $ denotes the image of 
the differential Chern character form homomorphism
$
ch_{\check{\sigma}}: \check{K}^1(X, \check{\sigma}) \longrightarrow
 \Omega^{odd}(X,  d- H ).$
\end{proof}

It is easy to see that horizontal homomorphisms in the commutative diagram (\ref{comm:odd}) are surjective.  In order to study the kernel of the forgetful homomorphism  
$
  \check{K}^1 (X, \check{\sigma}) \longrightarrow
 K^1(X,  \sigma), 
$
we need to introduce the even differential twisted K-theory   $\check{K}^0(X, \check{\sigma})$   and its differential Chern character.  We can define a  differential  twisted $K^0$-cocycle  to
be a $S^1$-family of   differential  twisted $K^1$-cocycles which passes through a
 differential  twisted $K^1$-cocycle that defines the zero element in $\check{K}^1 (X, \check{\sigma})$. 
  Then we have the following commutative diagram
 \[
 \xymatrix{
    \check{K}^{0}  (X, \check{\sigma}) \ar[r]\ar[d]^{ch_{\check{\sigma}}}
      & K^0(X, \sigma)\ar[d]^{Ch_{\check{\sigma}}}   \\
  \Omega^{ev}_0(X,  d- H) \ar[r]  & H^{ev}(X, d- H),  }
\] 
where $ \Omega^{ev}_0(X,  d-H) $ denotes the image of 
the differential  Chern character homomorphism
\[
ch_{\check{\sigma}}: \check{K}^0(X, \check{\sigma}) \longrightarrow
 \Omega^{ev}(X,  d- H ).
\]

\begin{theorem}\label{diff:twsitedK1}
 Denote by $K^0_{ \RR/\ZZ} (X, \check{\sigma})$ the kernel of the odd    differential Chern character 
$
ch_{\check{\sigma}}: \check{K}^1(X, \check{\sigma}) \longrightarrow
 \Omega^{odd}_0(X,  H), $
 then we have the following  two exact sequences:
 \ba\label{exact:1} 
 \xymatrix{
0\ar[r] & K^0_{ \RR/\ZZ}  (X, \check{\sigma})  \ar[r] & \check{K}^1(X, \check{\sigma}) \ar[r]^{ch_{\check{\sigma}}}& 
\Omega_0^{odd}(X, d-H)\ar[r]& 0,
 }\na
  \ba\label{exact:2}
 \xymatrix{
0\ar[r] &\disp{\frac{ \Omega^{ev}(X)}{\Omega_0^{ev}(X, d- H)}} \ar[r]  & \check{K}^1(X,\check{\sigma} )\ar[r] &  
K^1(X, \sigma) \ar[r]& 0,}
\na
 such that   the following  diagram commutes
 \ba\label{diagram:2}
  \xymatrix{&   0\ar[d] & \\
         & K^0_{ \RR/\ZZ} (X, \check{\sigma} ) \ar[rd] \ar[d]  &   \\
    0\to\disp{\frac{ \Omega^{ev}(X)}{\Omega_0^{ev}(X, d-H)}} \ar[r] \ar[r]\ar[dr]_{d-H} &
     \check{K}^{1}  (X, \check{\sigma}) \ar[r]\ar[d]^{\ ch_{\check{\sigma}}} 
      & K^1(X, \sigma)\ar[d]^{Ch_{\check{\sigma}}} \to 0 \\
         & \Omega_0^{odd}(X, d- H) \ar[r] \ar[d]  & H^{odd}(X, d- H ) \\
         &   0  &  }. \na
\end{theorem}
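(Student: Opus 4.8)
The plan is to obtain (\ref{exact:1}) directly from the definitions, to write down the maps in (\ref{exact:2}) explicitly in terms of differential $K^1$-cocycles and check exactness using the elementary equivalences (1)--(3) defining $\check K^1(X,\check\sigma)$, and then to assemble the interlocking diagram (\ref{diagram:2}) from these two sequences together with the commutative square (\ref{comm:odd}) of Theorem \ref{twisted:1}.

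Sequence (\ref{exact:1}) is essentially tautological. By definition $K^0_{\RR/\ZZ}(X,\check\sigma)=\Ker\bigl(ch_{\check\sigma}\colon\check K^1(X,\check\sigma)\to\Omega_0^{odd}(X,d-H)\bigr)$, so exactness at the first two terms is automatic, while exactness at $\Omega_0^{odd}(X,d-H)$ is the defining property that this group is the image of $ch_{\check\sigma}$; one only records, from (\ref{form:1}), that $ch_{\check\sigma}$ is additive for the abelian group structure on $\check K^1(X,\check\sigma)$ (direct sum of equivariant Fredholm families, addition of the $\eta$-components).

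The substance lies in (\ref{exact:2}). Surjectivity of the forgetful map $I\colon\check K^1(X,\check\sigma)\to K^1(X,\sigma)$ is immediate: a class is represented by a $\PU(\cH)$-equivariant $f\colon\cP_\sigma\to\Fred_*^{sa}$ with discrete spectrum, and choosing spectral cuts $\{\lambda_i\}$ (possible by compactness of $X$) and a partition of unity $\{\rho_i\}$ lifts it to $[(f,\{\lambda_i\},\{\rho_i\},0)]$. To construct the injection $a$, fix a ``trivial'' cocycle: let $f_0$ be the constant map to an invertible self-adjoint operator with a spectral gap about $0$ in the model ${\mathcal F}^{sa}_*$, and choose distinct spectral cuts $\lambda_1<\cdots<\lambda_n$ inside that gap, so that every $E_{ij}$ is the zero bundle and $ch_{\check\sigma}(f_0,\{\lambda_i\},\{\rho_i\})=0$; set $a([\eta])=[(f_0,\{\lambda_i\},\{\rho_i\},\eta)]$. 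Then $ch_{\check\sigma}(a([\eta]))=(d-H)\eta$ and $I(a([\eta]))=0$, and, since elements of $\Omega_0^{ev}(X,d-H)$ are $(d-H)$-closed and $(d-H)\Omega^{ev}(X)\subseteq\Omega_0^{odd}(X,d-H)$ by (\ref{form:1}), both $a$ and the diagonal $\eta\mapsto(d-H)\eta$ descend to the quotient $\Omega^{ev}(X)/\Omega_0^{ev}(X,d-H)$ once $a$ is shown well defined there. That well-definedness comes from move (3): a loop of Fredholm families based at $f_0$ with a $t$-independent spectral cut identifies $(f_0,\{\lambda_i\},\{\rho_i\},\eta)$ with $(f_0,\{\lambda_i\},\{\rho_i\},\eta+\int_{S^1}ch_{\check\sigma}(f_t,\{\lambda_i\},\{\rho_i\}))$, and the forms arising this way are precisely the differential Chern character forms of $\check K^0$-cocycles, i.e. they exhaust $\Omega_0^{ev}(X,d-H)$ modulo $\Im(d-H)$.

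It remains to check exactness of (\ref{exact:2}) at $\check K^1(X,\check\sigma)$. If $I(c)=0$ the underlying $f$ is $\PU(\cH)$-equivariantly homotopic to $f_0$, and moves (1)--(3) let us re-represent $c$ as $(f_0,\{\lambda_i\},\{\rho_i\},\eta)=a([\eta])$; the reverse inclusion $\Im a\subseteq\Ker I$ was noted above. For injectivity of $a$: if $a([\eta])=0$, trace $\eta$ through the finite chain of moves (1)--(3) relating $(f_0,\{\lambda_i\},\{\rho_i\},\eta)$ to $(f_0,\{\lambda_i\},\{\rho_i\},0)$; each move alters the $\eta$-component only by one of the correction forms of Proposition \ref{equivalence}, and concatenating the homotopies occurring in the type-(3) moves into loops based at $f_0$ identifies the cumulative change with a differential Chern character form of a $\check K^0$-cocycle, hence with an element of $\Omega_0^{ev}(X,d-H)$, so $[\eta]=0$. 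With both sequences established, the diagram (\ref{diagram:2}) commutes: its vertical line is (\ref{exact:1}), its horizontal line is (\ref{exact:2}), the diagonals are the composites $K^0_{\RR/\ZZ}\hookrightarrow\check K^1\xrightarrow{I}K^1$ and $\Omega^{ev}/\Omega_0^{ev}\xrightarrow{a}\check K^1\xrightarrow{ch_{\check\sigma}}\Omega_0^{odd}$, and the lower-right square is the square (\ref{comm:odd}) of Theorem \ref{twisted:1}. The main obstacle is this last bookkeeping step --- controlling how the $\eta$-component evolves along an arbitrary equivalence chain and matching the total change with the image of $\check K^0(X,\check\sigma)$ under its differential Chern character --- and it is precisely where one must pin down the definition of $\check K^0(X,\check\sigma)$ and of $ch_{\check\sigma}$ on it.
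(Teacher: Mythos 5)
Your route is essentially the paper's own: (\ref{exact:1}) is read off from the definitions, (\ref{exact:2}) is obtained by anchoring the $\eta$-component to a fixed ``trivial'' differential $K^1$-cocycle whose spectral bundles all vanish, the ambiguity in $\eta$ is identified --- through the definition of $\check{K}^0(X,\check{\sigma})$ as $S^1$-families of differential $K^1$-cocycles passing through the zero element --- with $\Omega_0^{ev}(X,d-H)$ modulo $\Im(d-H)$, and the diagram (\ref{diagram:2}) is then assembled from the two sequences together with the square (\ref{comm:odd}). This is exactly the structure of the paper's argument, which is no more detailed than yours on the bookkeeping step you single out.

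There is, however, one step that does not survive the twist as you wrote it: you take $f_0$ to be the \emph{constant} map to an invertible self-adjoint operator with a spectral gap about $0$. A constant map $\cP_\sigma \to \Fred_*^{sa}$ is $\PU(\cH)$-equivariant only if its value is fixed under conjugation by every unitary, i.e.\ is a scalar, which is excluded by the requirement of both positive and negative essential spectrum; so for $[\sigma]\neq 0$ (the case of interest) your base cocycle does not exist, and with it the map $a$. The paper instead takes $F$ to be a $\PU(\cH)$-equivariant map from $\cP_\sigma$ into the adjoint orbit of a fixed grading operator with both eigenspaces infinite-dimensional (such an equivariant map exists, the space of these gradings being a contractible homogeneous space for $\U(\cH)$), with spectral cut $\{0\}$ and partition of unity $\{1\}$; then all spectral bundles are zero, $ch_{\check{\sigma}}(F,\{0\},\{1\})=0$, and every one of your subsequent steps --- well-definedness and injectivity of $a$ via the moves of Proposition \ref{equivalence}, exactness at $\check{K}^1(X,\check{\sigma})$, and the commutativity of (\ref{diagram:2}) --- goes through unchanged with $f_0$ replaced by this $F$.
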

\begin{proof}  There is an exact sequence
\[ \xymatrix{
\disp{\frac{ \Omega^{ev}(X)}{\Im(d-H)  }} \ar[r]  & \check{K}^1(X,\check{\sigma} )\ar[r] &  
K^1(X, \sigma) \ar[r]& 0,}
\]
where  $\eta \in \disp{\frac{ \Omega^{ev}(X)}{\Im (d-H)  }} $  is mapped to 
a differential twisted $K^1$-class given by
\[
[(F, \{0\}, \{1\}, \eta)].
\]
Here $F $ is a   $\PU(\cH)$-equivariant map  from $\cP_\sigma$ to an orbit
of  the adjoint action of $\U(\cH) $ through a fixed  grading operator in  $\cH$ such that 
both eigenspaces  are infinite dimensional. 
 Note that $(F, \{0\}, \{1\}, \eta)$ is equivalent to $ (F, \{0\}, \{1\}, 0)$ if and only if
$\eta$  is a differential twisted Chern character form,  modulo  $\Im (d-H)$,   of an even differential twisted $K^0$-cocycle, that is 
\[
\eta   \in  \dfrac{  \Omega_0^{ev}(X, d-H) }{\Im (d-H)}.
\]
This gives rise to the exact sequence (\ref{exact:2}).   The verification of the commutative diagram 
(\ref{diagram:2}) is straightforward. 
\end{proof}

\begin{theorem}\label{diff:twsitedK0} Denote by $K^1_{ \RR/\ZZ} (X, \check{\sigma})$ the kernel of the even   differential Chern character 
$
ch_{\check{\sigma}}: \check{K}^0(X, \check{\sigma}) \longrightarrow
 \Omega^{ev}_0(X,  H), $
 then we have the following exact sequences:
 \ba\label{exact:3} 
 \xymatrix{
0\ar[r] & K^1_{ \RR/\ZZ}  (X, \check{\sigma})  \ar[r] & \check{K}^0(X, \check{\sigma}) \ar[r]^{ch_{\check{\sigma}}}& 
\Omega_0^{ev}(X, H)\ar[r]& 0,
 }\na
   \ba\label{exact:4}
 \xymatrix{
0\ar[r] &\disp{\frac{ \Omega^{odd}(X)}{\Omega_0^{odd}(X, H)}} \ar[r]  & \check{K}^0(X,\check{\sigma} )\ar[r] &  
K^0(X, \sigma) \ar[r]& 0,}
\na
such that   the following  diagram commutes
\ba\label{diagram:3} 
  \xymatrix{  &   0\ar[d] & \\
         & K^1_{ \RR/\ZZ} (X, \check{\sigma}) \ar[rd]  \ar[d]  &    \\
    0\to\disp{\frac{ \Omega^{odd}(X)}{\Omega_0^{odd}(X,d-  H)}} \ar[r]\ar[dr]_{d-H} &
     \check{K}^0  (X, \check{\sigma}) \ar[r]\ar[d]^{\ ch_{\check{\sigma}}} 
      & K^0(X, \sigma)\ar[d]^{Ch_{\check{\sigma}}} \to 0 \\
         & \Omega_0^{ev}(X, d- H) \ar[r] \ar[d]  & H^{ev}(X, d- H ) \\
         &   0  & . } \na
         \end{theorem}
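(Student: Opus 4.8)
The strategy is to transpose, with even and odd interchanged, the argument proving Theorem \ref{diff:twsitedK1}, using the model of a differential twisted $K^0$-cocycle as an $S^1$-family of differential twisted $K^1$-cocycles passing through a $K^1$-cocycle representing $0\in\check{K}^1(X,\check{\sigma})$, together with the fact that the even differential Chern character $ch_{\check{\sigma}}\colon\check{K}^0(X,\check{\sigma})\to\Omega^{ev}(X,d-H)$ is obtained by transgressing the odd differential Chern character form along this $S^1$-parameter, and that $\Omega_0^{ev}(X,d-H)$ is \emph{defined} to be its image. Granting this, the exact sequence (\ref{exact:3}) is immediate: exactness at $K^1_{\RR/\ZZ}(X,\check{\sigma})$ and at $\check{K}^0(X,\check{\sigma})$ holds because $K^1_{\RR/\ZZ}(X,\check{\sigma})$ is by definition $\ker ch_{\check{\sigma}}$, and surjectivity onto $\Omega_0^{ev}(X,d-H)$ is the definition of that image.

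For (\ref{exact:4}), I would first set up the forgetful homomorphism $\check{K}^0(X,\check{\sigma})\to K^0(X,\sigma)$ sending an $S^1$-family $(f_s,\{\lambda_i\},\{\rho_i\},\eta)$ to the class of the underlying loop $s\mapsto f_s$ under the suspension description of $K^0(X,\sigma)$ as based loops of $K^1$-data. To see this is onto: represent a given class in $K^0(X,\sigma)$ by a based loop $s\mapsto f_s$ of $\PU(\cH)$-equivariant maps $\cP_\sigma\to\Fred_*^{sa}$ with $f_0$ the zero $K^1$-element; by compactness of $S^1\times X$ and the discreteness and continuity of the spectra, after refining the cover and invoking Proposition \ref{equivalence}(3) one may arrange a family of spectral cuts $\{\lambda_i\}$ independent of $s$, so that $(f_\bullet,\{\lambda_i\},\{\rho_i\},0)$ is a differential twisted $K^0$-cocycle with the prescribed underlying class. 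Then, exactly as in the proof of Theorem \ref{diff:twsitedK1}, an odd form $\eta\in\Omega^{odd}(X)$ determines the `purely differential' differential twisted $K^0$-class it generates (the constant $S^1$-family at the zero $K^1$-cocycle carrying $\eta$), which lies in the kernel of the forgetful map and has differential Chern character form $(d-H)\eta$; this yields the exact sequence
\[
\frac{\Omega^{odd}(X)}{\Im(d-H)}\longrightarrow\check{K}^{0}(X,\check{\sigma})\longrightarrow K^{0}(X,\sigma)\longrightarrow 0 ,
\]
and one checks, as in Theorem \ref{diff:twsitedK1}, that the kernel of the first arrow consists precisely of the odd forms that are a differential Chern character form modulo $\Im(d-H)$ of a differential twisted $K^1$-cocycle, i.e. of $\Omega_0^{odd}(X,d-H)/\Im(d-H)$ (note $\Im(d-H)\subseteq\Omega_0^{odd}(X,d-H)$, since each exact form $(d-H)\eta'$ is the differential Chern character form of the $K^1$-cocycle $(F,\{0\},\{1\},\eta')$); passing to the quotient gives (\ref{exact:4}).

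Finally, the interlocking diagram (\ref{diagram:3}) is assembled from the two short exact sequences (\ref{exact:3}) and (\ref{exact:4}), the twisted Chern character $Ch_{\check{\sigma}}\colon K^0(X,\sigma)\to H^{ev}(X,d-H)$, and the homomorphism $d-H\colon\Omega^{odd}(X)/\Omega_0^{odd}(X,d-H)\to\Omega_0^{ev}(X,d-H)$; commutativity of each cell follows directly from the definitions of $ch_{\check{\sigma}}$ and of the forgetful map, exactly as for diagram (\ref{diagram:2}). I expect the two genuinely non-formal points to be the surjectivity of $\check{K}^0(X,\check{\sigma})\to K^0(X,\sigma)$ — where the issue is producing a spectral cut uniform along the whole $S^1$-family, resting on compactness and on Proposition \ref{equivalence}(3) — and the identification of $\ker\bigl(\Omega^{odd}(X)/\Im(d-H)\to\check{K}^0(X,\check{\sigma})\bigr)$ with $\Omega_0^{odd}(X,d-H)/\Im(d-H)$, which requires unwinding the equivalence relation defining $\check{K}^0$; the rest is the diagram chase dual to the one already performed for $\check{K}^1$.
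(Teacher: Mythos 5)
Your strategy---transposing the proof of Theorem \ref{diff:twsitedK1} with even and odd interchanged, using the definition of $\check{K}^0(X,\check{\sigma})$ through $S^1$-families of differential twisted $K^1$-cocycles passing through a representative of zero---is exactly what the paper intends, since it states Theorem \ref{diff:twsitedK0} without a separate argument; your treatment of (\ref{exact:3}), of the kernel $\Omega_0^{odd}(X,d-H)/\Im(d-H)$ via the constant family at $(F,\{0\},\{1\},\eta)$, and of the assembly of diagram (\ref{diagram:3}) is in line with that.

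However, the surjectivity step in (\ref{exact:4}) contains a claim that fails as written: that after refining the cover and invoking Proposition \ref{equivalence}(3) one can choose spectral cuts $\{\lambda_i\}$ independent of the loop parameter $s$. Refining the open cover of $X$ cannot achieve this, because the obstruction lies in the $s$-direction: for a based loop $s\mapsto f_s$, the eigenvalues of $f_s(\phi_i(x))$ may sweep across every level of $(-1,1)$ as $s$ traverses the circle. This happens, for instance, whenever the loop has nonzero spectral flow at some point $x$ (already for $X$ a point with trivial twisting the generator of $K^0(pt)\cong\pi_1(\Fred_*^{sa})$ is such a loop, and the theorem is asserted for arbitrary $\check{\sigma}$, torsion and trivial twistings included), and it can happen even when the net spectral flow vanishes (one eigenvalue moving from $-1$ to $1$ while another moves from $1$ to $-1$ touches every level). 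Proposition \ref{equivalence}(3) does not produce such cuts; it presupposes parameter-independent cuts and only controls the change of the Chern character form. The repair is to read ``$S^1$-family of differential twisted $K^1$-cocycles'' as the paper's definition permits, namely with the spectral cuts (and partitions of unity) chosen locally on an open cover of $X\times S^1$, hence varying with $s$; with that reading every based loop of $\PU(\cH)$-equivariant maps yields a differential twisted $K^0$-cocycle, surjectivity of the forgetful map follows, and the rest of your argument goes through unchanged.
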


\begin{remark}\label{locked:terms}  We  point out that commutative diagrams (\ref{diagram:2}) and (\ref{diagram:3})   of exact sequences are joined together by two interlocking  six-term exact sequences
\[
 \xymatrix{
K^1_{ \RR/\ZZ} (X, \check{\sigma})   \ar[r]&K^0(X, \sigma) \ar[r]^{Ch_{\check{\sigma}} }&   H^{ev}(X, d- H )   \ar[d]\\
H^{odd}(X, d- H )     \ar[u] & K^1(X, \sigma) \ar[l]^{Ch_{\check{\sigma}}}&
 K^0_{ \RR/\ZZ} (X, \check{\sigma})       \ar[l]
   }
 \]
and 
\[
 \xymatrix{
\disp{\frac{ \Omega^{odd}(X)}{\Omega_0^{odd}(X,d-  H)}}  \ar[r]^{d-H} & \Omega_0^{ev}(X, d- H)   \ar[r]&   H^{ev}(X, d- H )   \ar[d]\\
H^{odd}(X, d- H )     \ar[u] & \Omega_0^{odd}(X, d- H) \ar[l]^{Ch_{\check{\sigma}}}&
\disp{\frac{ \Omega^{ev}(X)}{\Omega_0^{ev}(X,d-  H)}}      \ar[l]^{d-H} 
   }.
   \] 
  In \cite{SimSul},  an analogue of the above interlocking  commutative diagram   in ordinary differential
  cohomology 
 is called the Character Diagram. 
 \end{remark}

\subsection{Classifying space  of   twisted K-theory} 
 In this subsection,  we will give a universal model of twisted K-theory. We start with the even case.
   As a model for the classifying space of $K^0$
we first choose $\Fred$,  the space of all bounded Fredholm operators in an infinite dimensional separable complex Hilbert space $\cH$.
Consider the twisted product 
\[
\mathfrak{G} = \PU(\cH) \ltimes  GL(\infty),
\] 
where the group $\PU(\cH)$ acts on the group $GL(\infty)$, of operators that are invertible  and $1+$ finite rank, by conjugation.  Thus the product
in $\mathfrak{G}$ is given by
$$ (g, f) \cdot (g',f')= (gg', f(g f' g^{-1}) ).$$
A principal $\mathfrak{G}$-bundle over $X$, locally trivial with respect to an open cover $\{U_i\}$,  
is defined by  transition functions
$
(g_{ij}, f_{ij}):  U_i\cap U_i  \longrightarrow \mathfrak{G} 
$
satisfying the usual cocycle condition which encodes both the cocycle relation for the transition functions $\{ g_{ij}\} $ of the
$\PU(\cH)$ bundle over $X$ and the twisted cocycle relation  for $\{f_{ij}\}$
\ba\label{twisted:cocycle}
  f_{ij}(\hat{g}_{ij} f_{jk} \hat{g}_{ij}^{-1}) = f_{ik}, 
\na
 which is independent of the choice of the lifting $ \hat{g}_{ij}$ of $g_{ij}$ to $\U(\cH)$.  

Next we construct a universal $\mathfrak{G}$ bundle $E$ over the  classifying space $B\mathfrak{G} = E/\mathfrak{G}.$ The total
space is just the Cartesian product $E= \cP \times \cQ,$ where $\cP$ is the total space of a universal $\PU(\cH)$ bundle over the base
$K(\mathbb{ Z}, 3)$ and $\cQ$
is the total space of a universal $GL(\infty)$ bundle over the base $\Fred^{(0)},$ the index zero component of the classifying space  of $K^0.$  The total space of $\cQ$ is
the set of pairs $(q,w)$ with $q$ a parametrix of a index zero Fredholm operator $w.$ This space is contractible. This follows from the observation that the pairs  can be parametrized  by $(q,t)$ with $w= (1 +t)q^{-1}$
and $t$ is an arbitrary finite rank operator, $q$ an arbitrary invertible operator. The right action of $GL(\infty)$ is given by $(q,w) \cdot a=
(qa,w)$ and is free.

The right action of $(g,a)\in\mathfrak{G}$ on  $E$ is defined by 
$$(p,(q,w))\cdot (g,a) = (pg, (g^{-1}qag, g^{-1} wg))$$
where $w$ is a Fredholm operator with parametrix $q.$ It is easy to see that the action is free, because $\PU(\cH)$ acts freely on the
first component and the second component involves right multiplication of the invertible operator $q$ by the group element $a\in GL(\infty).$

Now we can establish a  model for the classifying space for even  twisted K-theory. 

\begin{theorem}\label{universal:0}  Given a principal $\PU(\cH)$-bundle $\cP_\sigma$
 over $X$ with $\sigma: X \to K(\mathbb Z, 3)$ representing a non-torsion class in $H^3(X, \ZZ)$,  then  the  even twisted K-theory  $K^0(X, {\sigma})$  can be identified with  the
set of homotopy classes of maps $X \to B\mathfrak{G}$ covering the map $\sigma$.  
\end{theorem}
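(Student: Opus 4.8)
The plan is to make the construction of $B\mathfrak{G}$ explicit, to identify it as a fibre bundle over $K(\ZZ,3)$ with fibre $\Fred^{(0)}$, and then to match its sections against the definition of $K^0(X,\sigma)$.

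First I would compute $B\mathfrak{G} = E/\mathfrak{G}$ in two stages. Since $GL(\infty)$ is normal in $\mathfrak{G} = \PU(\cH)\ltimes GL(\infty)$ and, by the explicit right action $(p,(q,w))\cdot(e,a) = (p,(qa,w))$, acts on $E = \cP\times\cQ$ only through the second factor, one first passes to $E/GL(\infty) = \cP\times(\cQ/GL(\infty)) = \cP\times\Fred^{(0)}$; the residual free action of $\mathfrak{G}/GL(\infty) = \PU(\cH)$ is then $(p,w)\cdot g = (pg,\, g^{-1}wg)$, so
\[
B\mathfrak{G} \;=\; \cP\times_{\PU(\cH)}\Fred^{(0)},
\]
the bundle over $B\PU(\cH) = K(\ZZ,3)$ associated with the universal $\PU(\cH)$-bundle $\cP$ through the conjugation action on the index-zero component of the Fredholm operators, the structure map $B\mathfrak{G}\to K(\ZZ,3)$ being induced by the projection $\mathfrak{G}\to\PU(\cH)$. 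Contractibility of $\cP$ and of $\cQ$ together with freeness of the $\mathfrak{G}$-action — all recorded above — make $E\to E/\mathfrak{G}$ a universal $\mathfrak{G}$-bundle, so this description of $B\mathfrak{G}$ is legitimate. Pulling back along $\sigma$, and using that the associated-bundle construction commutes with pullback while $\cP_\sigma = \sigma^*\cP$, one obtains $\sigma^*(B\mathfrak{G}) = \cP_\sigma\times_{\PU(\cH)}\Fred^{(0)}$, the index-zero sub-bundle $\Fred^{(0)}(\cP_\sigma)$ of $\Fred(\cP_\sigma)$. A map $X\to B\mathfrak{G}$ covering $\sigma$ is precisely a section of $\sigma^*(B\mathfrak{G})\to X$, and homotopies through maps covering $\sigma$ correspond to homotopies of sections; hence the set appearing in the statement is the set of homotopy classes of sections of $\Fred^{(0)}(\cP_\sigma)$.

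It remains to identify homotopy classes of sections of $\Fred^{(0)}(\cP_\sigma)$ with $K^0(X,\sigma)$, which by definition is the set of homotopy classes of sections of the full bundle $\Fred(\cP_\sigma) = \cP_\sigma\times_{\PU(\cH)}\Fred(\cH)$. As conjugation preserves the Fredholm index, $\Fred(\cP_\sigma)$ is the disjoint union of the sub-bundles $\Fred^{(n)}(\cP_\sigma)$, $n\in\ZZ$, and the inclusion $\Fred^{(0)}(\cP_\sigma)\hookrightarrow\Fred(\cP_\sigma)$ induces a map on homotopy classes of sections. It is injective because the index of a section is locally constant, so a homotopy in $\Fred(\cP_\sigma)$ between two index-zero sections never leaves $\Fred^{(0)}(\cP_\sigma)$. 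Surjectivity is where the non-torsion hypothesis enters, and I expect it to be the main obstacle: the claim is that over a connected $X$ every section $s$ of $\Fred(\cP_\sigma)$ has index $0$. Applying the index-bundle construction to the twisted family $\{f_i\colon U_i\to\Fred(\cH)\}$ underlying $s$ — after a compact, hence homotopically inessential, perturbation bringing the cokernels to constant rank — produces honest finite-rank $\sigma$-twisted vector bundles $\underline{\ker}$ and $\underline{\mathrm{coker}}$ with $\mathrm{rank}(\underline{\ker}) - \mathrm{rank}(\underline{\mathrm{coker}}) = \mathrm{ind}(s)$. If $\mathrm{ind}(s)\neq 0$, one of these has some rank $r>0$; projectivising its $\U(r)$-valued transition cocycle $\{h_{ij}\}$, which satisfies $h_{ij}h_{jk}h_{ki} = \sigma_{ijk}\cdot\mathrm{Id}$, yields a $\PU(r)$-bundle whose obstruction to lifting to $\U(r)$ is exactly $[\sigma]$; but such obstruction classes are always torsion (of order dividing $r$), contradicting the hypothesis. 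Hence $\mathrm{ind}(s) = 0$, every section of $\Fred(\cP_\sigma)$ is homotopic to one landing in $\Fred^{(0)}(\cP_\sigma)$, and the inclusion-induced map is a bijection. Combining the two steps gives the asserted identification; for disconnected $X$ one runs the last step on each component. The only genuinely delicate ingredient is the twisted index-bundle construction together with the torsion obstruction it produces — everything else is formal bookkeeping with associated bundles and pullbacks.
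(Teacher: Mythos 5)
Your reduction of the statement is fine and is in fact cleaner than the paper's own argument: computing $B\mathfrak{G}=\cP\times_{\PU(\cH)}\Fred^{(0)}$ by the two-stage quotient, identifying maps covering $\sigma$ with sections of $\cP_\sigma\times_{\PU(\cH)}\Fred^{(0)}$, and the injectivity step via local constancy of the index are all correct (the paper instead builds a $\mathfrak{G}$-valued cocycle $(g_{ij},f_{ij})$ from local parametrices obtained by spectral projections, and never really addresses the converse direction). The genuine gap is exactly at the step you yourself flag as delicate: the claim that an arbitrary section of $\Fred(\cP_\sigma)$ admits a (necessarily equivariant, i.e.\ twisted) perturbation after which kernel and cokernel are honest finite-rank $\sigma$-twisted vector bundles. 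This is false in general when $[\sigma]$ is non-torsion, and its failure is not a technicality but the whole point of using Fredholm families for non-torsion twists. Indeed, if such a perturbation always existed, then for non-torsion $[\sigma]$ both twisted bundles would be forced (by the very torsion obstruction you invoke) to have rank zero, so every twisted family would be homotopic to an invertible one and $K^0(X,\sigma)$ would vanish for every non-torsion twist; but, e.g., for $X=S^3\times S^3$ with $\sigma$ pulled back from a degree-$k$ class on the first factor, the K\"unneth formula gives $K^0(X,\sigma)\cong K^1(S^3,k)\otimes K^1(S^3)\cong\ZZ_k\neq 0$. So no conclusion can be drawn from the construction as you state it, even though the statement you are after (every section has index zero when $[\sigma]$ is non-torsion) is true.

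The fact itself needs a different argument. One fix in the spirit of your obstruction reasoning: pull back Quillen's determinant line bundle $\mathrm{DET}\to\Fred$ along the local families $f\circ\phi_i$. On overlaps the lifts $\hat g_{ij}$ induce isomorphisms of these local line bundles, and rescaling $\hat g_{ij}$ by $\lambda\in U(1)$ rescales the induced map on $\Lambda^{\mathrm{top}}\ker\otimes(\Lambda^{\mathrm{top}}\mathrm{coker})^*$ by $\lambda^{-n}$, $n=\mathrm{ind}(s)$; hence the $\{\mathrm{DET}_i\}$ assemble into a rank-one $(-n\sigma)$-twisted line bundle, whose existence forces $n[\sigma]=0$ in $H^3(X,\ZZ)$ and so $n=0$ for non-torsion $[\sigma]$. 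Alternatively, one can quote the Atiyah--Hirzebruch spectral sequence for twisted K-theory, where $d_3=Sq^3_{\ZZ}+[\sigma]\cup(-)$ kills the rank in $H^0(X,\ZZ)$ unless it annihilates $[\sigma]$. (Either way you should require $[\sigma]$ to be non-torsion on each connected component, or take $X$ connected; the blanket hypothesis in the statement does not rule out a component on which the twist is torsion, where sections of nonzero index do exist --- a looseness the paper shares.) With the index-vanishing step repaired in this way, the rest of your argument goes through.
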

\begin{proof}
Let $f: \cP_\sigma \to \Fred $ be a $\PU(\cH)$-equivariant  family of Fredholm operators.  We can select an open cover $\{U_i\}$  of $X$ such that on each $U_i$  there is a local section  $\phi_i: U_i \to \cP_\sigma$ and  for each $i$ the index zero  Fredholm operators $f(\phi_i(x))$, $x\in U_i$ have a 
gap in the spectrum at some $\lambda_i \neq 0.$ Then over $U_i$ we have a finite rank vector bundle $E_i$ defined  by the spectral projection $f(\phi_i(x))^2 < \lambda_i^2.$  
As usual the transition functions are
$g_{ji} : U_{ji} \to \PU(\cH)$ 
with  $\phi_i(x)= \phi_j(x) g_{ji}(x)$
and the lifts $\hat{g}_{ij}$ of $g_{ij}$
to $\U(\cH)$ satisfy
$ \hat{g}_{ij} \hat{g}_{jk} \hat{g}_{ki} =\sigma_{ijk} Id  $
on triple intersections with $\sigma_{ijk}$ taking values in $U(1).$

 We may assume that $E_i$ is a trivial vector bundle over $U_i$  of rank $n_i$ by passing to a finer cover if necessary.  
 Choosing a trivialization of $E_i$  gives  a  $\ZZ_2 $ graded parametrix  $q_i(x)$ (an inverse up to
finite rank operators) for each $f(\phi_i(x))$, $x\in U_i$.   The operator $q_i(x)^{-1}$ is defined as the direct sum of the restriction
of $f(\phi_i(x))$ to the orthogonal complement of $E_i$ in $\cH$ and an isomorphism between the vector bundles $E^+_i$ and $E_i^-.$  Clearly
then $f(\phi_i(x)) q_i(x) =1$ modulo rank $n_i$ operators. 
 
For $x\in U_{ij}$ we have a pair of parametrices $q_i(x)$ and $ q_j(x) $ for $f(\phi_i(x))$
and $f(\phi_j(x))$  respectively. These are related by an invertible operator $f_{ij}(x)$ of type 
$1+$ finite rank and
$$  \hat{g}_{ij} q_j (x) \hat{g}_{ij}^{-1}  = q_i(x) f_{ij}(x).$$ 
The conjugation on the left hand side  by $\hat{g}_{ij}$  comes from the equivariance relation
\[
 f(\phi_j(x)) = f(\phi_i(x)g_{ij}(x)) = \hat{g}_{ij}(x)^{-1} f(\phi_i(x))\hat{g}_{ij}(x).
\] 

The system $\{f_{ij}\}$ does not quite satisfy the \v{C}ech cocycle relation   because of the
different local sections $\phi_i: U_i \to \mathcal{P}_{\sigma}$ involved. Instead, we have
 on $U_{ijk}$ 
$$\hat{g}_{jk} q_k \hat{g}_{jk}^{-1} = q_j f_{jk} = (\hat{g}_{ij}^{-1} q_i f_{ij} \hat{g}_{ij} )f_{jk}
= \hat{g}_{jk}( \hat{g}_{ik}^{-1} q_i f_{ik}\hat{g}_{ik} ) \hat{g}_{jk} ^{-1}.$$
Using the   relation $\hat{g}_{jk}  \hat{g}_{ik}^{-1} =\sigma_{ijk} \hat{g}_{ij}^{-1}$,  we get
\[
    \hat{g}_{jk} ( \hat{g}_{ik}^{-1} q_i f_{ik}\hat{g}_{ik} )  \hat{g}_{jk}^{-1} = \hat{g}_{ij}^{-1} q_i f_{ik} \hat{g}_{ij} 
\]
multiplying the last equation from right by $\hat{g}_{ij}^{-1}$ and from the 
left by $q_i^{-1}\hat{g}_{ij} $ one gets the twisted cocycle relation
\[
 f_{ij}(\hat{g}_{ij} f_{jk} \hat{g}_{ij}^{-1}) = f_{ik}, 
\]
 which is independent of the choice of the lifting $ \hat{g}_{ij}$.  
We may think of  (\ref{twisted:cocycle})   as defining an untwisted cocycle relation
for $\{(g_{ij}, f_{ij})\}$  in the twisted product 
$
\mathfrak{G} = \PU(\cH) \ltimes GL(\infty),
$
 In summary, this cocycle   $\{(g_{ij}, f_{ij})\} $ defines a principal
$\mathfrak{G}$ bundle over $X$ whose classifying map is a continuous map $X\to B\mathfrak{G}$. 
A homotopic  $\PU(\cH)$-equivariant  family of Fredholm operators gives rise to a homotopic
classifying map $X\to B\mathfrak{G}$.  
\end{proof}

\begin{remark}
The base space $B\mathfrak{G}$ is a fiber bundle over $K(\mathbb Z,3).$ 
The projection is defined by 
\[
\xi((p,(q,w))\mathfrak{G} ) = \pi(p),
\]
 where  $\pi: \cP \to K(\mathbb Z, 3)$ is the projection. The fiber $\xi^{-1}(z)$ at $z\in K( \mathbb Z,3)$ is isomorphic (but not canonically so) to the space $\Fred$ of
Fredholm operators; to set up the isomorphism one needs a choice of element $p$ in the fiber $\pi^{-1}(z).$ Thus a section of the bundle $B\mathfrak{G}$ over $K(\mathbb Z,3)$ is the same thing as a $\PU(\cH)$ equivariant map $\gamma$
 from $\cP$ to $\Fred$.  We use the notation $K^0(K(\mathbb Z, 3), \cP)$ for the twisted K-group of $K(\mathbb Z, 3)$ with $\cP$ being thought of as the twist. Then $\gamma$ is defining an element of
 $K^0(K(\mathbb Z, 3), \cP)$.
 This latter group is not known  (and of course the same is true for the
odd twisted K-group of $K(\mathbb Z,3)$).  The case of ordinary  K-theory on the Eilenberg-MacLane space $K(\mathbb Z, 3)$ is already 
complicated, but it is known that it is given by the integral cohomology of the space $K(\mathbb Q, 3),$ (Cf. \cite{AHod}).
\end{remark}

The twisted Chern character  on $K^0(X, \mathcal{\sigma})$ can also be constructed  by choosing a connection $\nabla$ on a principal $\mathfrak{G}$ bundle  over $X$ associated to the cocycle $\{(g_{ij}, f_{ij}\}$.   Locally, on a 
trivializing open cover  $\{U_i\}$ of $X$, we can lift the connection to a connection taking values in the Lie algebra $\hat{\mathbf{g}}$ of the central
extension $U(H) \times GL(\infty)$ of $\mathfrak{G}.$  Denote by $\hat F_{\nabla}$ the curvature of this connection. On the overlaps $U_{ij}$ 
the curvature satisfies a twisted relation
$$ \hat F_{\nabla,j} =  Ad_{(g_{ij}, f_{ij})^{-1}}  \hat{F}_{\nabla,i}  + g^*_{ij} c,$$
where $c$ is the curvature of the canonical connection $\theta$ on
 the principal $U(1)$-bundle $\U(\cH)\to \PU(\cH).$ 

Since the Lie algebra $\mathbf{u}(\infty) \oplus \CC$ is an ideal in the Lie algebra of $U(\cH) \times GL(\infty),$ the projection $F'_{\nabla,i}$ of the curvature
$\hat F_{\nabla,i}$ onto this subalgebra transforms in the same way as $\hat F$ under change of local trivialization. It follows that for a $\PU(\cH)$-equivariant map $f: \cP_\sigma \to \Fred$,  we can define a twisted 
Chern character form of $f$ as
$$  ch_{\check{\sigma}} (f, \nabla )   = e^{B_i} \text{tr}\, e^{ F'_{\nabla,i}/2\pi i},$$ over $U_i$. 
Here the trace is well-defined on $\mathbf{gl}(\infty)$ and on the center $\CC$ it is defined as the coefficient of the unit operator.  
Note that   $ch_{\check{\sigma}} (f, \nabla )$  is globally defined and $(d-H)$-closed
$$(d-H) ch_{\check{\sigma}} (f,  \nabla )  =0,$$ 
and depends on the differential twisting $
\check{\sigma} = (\cG_\sigma, \theta, \omega ),$
 and  a choice of  a connection $\nabla$ on a principal $\mathfrak{G}$ bundle   over $X.$

 We now give a short description of the classifying space in odd twisted K-theory.
First let $\cH = \cH_+ \oplus \cH_-$ be a decomposition into isomorphic subspaces.
Operators on $\cH$ can be written as two by two matrices of operators with respect to this decomposition.
The group
$\U_{res}=\U_{res}(\cH)$ is the group of unitary operators in $\cH$ with Hilbert-Schmidt off-diagonal blocks (for more 
on this definition see \cite{PrSe}).
In the non-twisted case an element of $K^1(X)$ is given by a
homotopy class of maps $X\to U(\infty).$  Since $U(\infty)$ is the
base space of a universal $\U_{res}$ bundle, this is the same thing as
giving an equivalence class of $\U_{res}(\cH)$ bundles over $X.$  

\begin{remark} \label{rem:CM}  The universal $\U_{res}$ bundle $Q$ can be constructed as follows. By
Bott periodicity, $\U_{res}$ is homotopy equivalent \cite{PrSe} to the group
$\Omega U(\infty)$ of smooth based loops in $U(\infty).$ The universal
$\Omega U(\infty)$ bundle $Q$ over $U(\infty)$ is simply the space of
smooth  paths $f:[0,1]\to U(\infty)$ with $f(0)=1$ and  $f^{-1}df$ periodic. The right
action of $\Omega U(\infty)$ on $Q$ is just the pointwise
multiplication of paths, see Appendix 2 in \cite{CM}.
\end{remark}

A $\U_{res}$ bundle over $X$ can be given in terms of local transition
functions $\phi_{ij} : U_{ij} \to \U_{res}.$ Next we need to twist
this construction by the transition functions $g_{ij}$ of the $\PU(\cH)$
bundle over $X.$ In the odd case for any $g\in \PU(\cH)$ we had an automorphism 
$Aut_g$ of $U(\infty)$ given by a conjugation by any unitary operator
covering $g\in \PU(\cH).$ In the case of $\U_{res}$ we have to be a
little more careful since conjugation by a unitary operator is not in
general an automorphism of $\U_{res}.$ However, we recall that
$\U_{res}$ is a classifying space for even K-theory and is homotopy
equivalent to the space of bounded Fredholm operators in any separable infinite dimensional Hilbert
space. Actually, the homotopy equivalence is given explicitly as the
map $g\mapsto a,$ where 
$$ g= \left( \begin{matrix} a&b\\ c&d \end{matrix} \right)$$ 
in a decomposition $\cH=\cH_+\oplus \cH_-,$  \cite{PrSe}, \cite{Wur}. The block $a$ is always a Fredholm operator and one can construct a homotopy
inverse to this map. This gives directly the right way to define
automorphisms
of $\U_{res}$ by elements of $\PU(\cH_+),$ compatible with automorphisms
of the space of Fredholm operators: For any $g\in \PU(\cH_+)$ one selects 
$\hat g\in \U(\cH_+)$ and considers this as a unitary operator in $\cH$ 
by 
$$g \mapsto \left(\begin{matrix} \hat g & 0 \\0 & \hat
  g\end{matrix}\right).$$ 
  Thus, in the same way as in the even case, we can think of an element in
the twisted K-theory group $K^1(X, \sigma)$ defined by transition functions 
$h_{ij} : U_{ij} \to \mathfrak{H},$ where $\mathfrak{H}$ is the group 
$$\mathfrak{H} = \PU(\cH_+) \times_{\PU(\cH_+)} \U_{res}$$ 
with multiplication $(g,h)(g',h') = (gg', h (gh'g^{-1}).$

More precisely, using the concrete realization of the universal $\U_{res}$ bundle in  
Remark  \ref{rem:CM},
we can choose local sections $q_i: U_i \to Q$ by choosing for each $x\in U_i$ a path $q_i(x)$ in $U(\infty)$ joining 
the neutral element to the end point $f(\phi_i(x)),$ where we have taken $f$ as the equivariant function on $\cP_{\sigma}$ taking values
in the model $U(\infty)$ for the classifying space $\Fred_*^{sa}$ of odd K-theory. 
 On the intersection $U_{ij}$ we then have 
$$ \hat g_{ij}(x)  q_j(x) \hat g_{ij}^{-1}(x)  =  q_i(x)   \cdot f_{ij}(x),$$
where $f_{ij}(x)$ is an element of the loop group $\Omega U(\infty),$ which by the Remark gives an
element in $\U_{res}.$ From this transformation rule we obtain the  group multiplication law above  for the transition
functions $h_{ij}=(g_{ij}, f_{ij})$ taking values in the group $\mathfrak{H}.$

As in the even case, one can check that the classifying space
$B\mathfrak{H}$ is a fiber bundle over the base $K(\mathbb Z, 3)$ with
$\Fred_*^{sa}$ (or equivalently, $U(\infty)$) as the fiber. It is
  written as  $B\mathfrak{H} = \cP \times_{\PU(\cH_+)} \Fred_*^{sa},$
  where now  $\cP$ is the universal $\PU(\cH_+)$ bundle over $K(\mathbb
  Z, 3).$

\begin{theorem}\label{universal:1}  Given a principal $\PU(\cH_+)$-bundle $\cP_\sigma$
 over $X$ defined by $\sigma: X \to K(\mathbb Z, 3)$,  the odd  twisted K-group  $K^1(X, {\sigma})$ is the set of homotopy classes of maps $X \to B\mathfrak{H}$ covering the map $\sigma$.  
\end{theorem}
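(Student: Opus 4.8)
The plan is to follow the proof of Theorem \ref{universal:0} essentially verbatim, using the dictionary between sections of an associated bundle and equivariant maps. Since $B\mathfrak{H}$ was just described as a fibre bundle over $K(\mathbb{Z},3)$ with fibre $\Fred_*^{sa}$ and total space $\cP \times_{\PU(\cH_+)} \Fred_*^{sa}$, where $\cP$ is the universal $\PU(\cH_+)$-bundle over $K(\mathbb{Z},3)$, I would first record that a map $g\colon X \to B\mathfrak{H}$ lying over $\sigma$ is precisely a section of the pulled-back bundle $\sigma^{*}(B\mathfrak{H}) \to X$, and that this correspondence is compatible with homotopies: a vertical homotopy of maps covering $\sigma$ is the same as a homotopy of sections. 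Moreover $\sigma^{*}(B\mathfrak{H})$ is canonically the associated bundle $\cP_\sigma \times_{\PU(\cH_+)} \Fred_*^{sa}$.

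Next I would invoke the standard homeomorphism between the space of sections of $\cP_\sigma \times_{\PU(\cH_+)} \Fred_*^{sa}$ and the space of $\PU(\cH_+)$-equivariant maps $\cP_\sigma \to \Fred_*^{sa}$: a section $s$ corresponds to the equivariant map sending $p \in \cP_\sigma$ to the $\Fred_*^{sa}$-coordinate of $s(\pi(p))$ read off in the trivialization determined by $p$, and conversely. This is a homeomorphism of mapping spaces, so it induces a bijection on path components, i.e. on homotopy classes. By the definition of twisted $K^1$ recalled in Section 2 (or by the local $\U_{res}$-cocycle description given just above, with $f$ the equivariant function on $\cP_\sigma$ valued in the model $U(\infty)$ for $\Fred_*^{sa}$), homotopy classes of such equivariant maps are exactly the elements of $K^1(X,\sigma)$. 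Chaining the three bijections gives the theorem.

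If one prefers the explicit route matching the even case, one runs the cocycle construction directly. Given $f\colon \cP_\sigma \to \Fred_*^{sa}$ valued in $U(\infty)$, choose local sections $\phi_i\colon U_i \to \cP_\sigma$ and, for each $x \in U_i$, a path $q_i(x)$ in $U(\infty)$ from the identity to $f(\phi_i(x))$; on overlaps one gets $\hat g_{ij} q_j \hat g_{ij}^{-1} = q_i \cdot f_{ij}$ with $f_{ij}$ a loop, hence (by Remark \ref{rem:CM}) an element of $\U_{res}$, and the pair $h_{ij} = (g_{ij}, f_{ij})$ satisfies the cocycle relation for the group $\mathfrak{H}$. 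This cocycle classifies a principal $\mathfrak{H}$-bundle, equivalently a map $X \to B\mathfrak{H}$ covering $\sigma$; homotopic equivariant families of operators give homotopic classifying maps, and the converse is again the restriction-to-a-section argument of the previous paragraph.

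The only place where care is needed, and hence the main obstacle, is the homotopy-theoretic bookkeeping identifying the fibre of $B\mathfrak{H}$ with $\Fred_*^{sa}$: one must check that the homotopy equivalences $\U_{res} \simeq \Omega U(\infty)$ and $\U_{res} \simeq \Fred_*^{sa}$ used above (via the block map $g \mapsto a$) are compatible with the $\PU(\cH_+)$-actions defining $\mathfrak{H}$, so that the model-descriptions of $K^1(X,\sigma)$ really do coincide and the passage from paths $q_i$ to loops $f_{ij}$ is consistent with the $\U_{res}$-structure. This was essentially arranged in the construction of $\mathfrak{H}$ preceding the theorem; once it is granted, everything else is the formal bundle--section dictionary.
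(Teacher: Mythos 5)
Your proposal is correct and follows essentially the same route as the paper, which gives no separate proof but relies on the discussion preceding the theorem: the path-cocycle construction $\hat g_{ij} q_j \hat g_{ij}^{-1} = q_i f_{ij}$ producing an $\mathfrak{H}$-valued cocycle, together with the identification of sections of $\cP_\sigma \times_{\PU(\cH_+)} \Fred_*^{sa}$ with $\PU(\cH_+)$-equivariant maps (the same dictionary the paper invokes in the remark after Theorem \ref{universal:0}). Your flag about checking that the equivalences $\U_{res}\simeq \Omega U(\infty)\simeq \Fred_*^{sa}$ are compatible with the $\PU(\cH_+)$-action is exactly the point the paper handles in setting up $\mathfrak{H}$, so nothing further is needed.
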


 \begin{example} \label{example}
   Let $E$ be a real Euclidean vector bundle of   rank $2n$  over $X$.   Then there is a lifting bundle
gerbe  $\cG_{W_3(E)}$ over $X$, called
the Spin bundle gerbe,   associated to  the frame bundle $SO(E)$ and the 
 flat $U(1)$-bundle over  $SO(2n)$:
\[
U(1) \to Spin^c(2n) = Spin(2n)\times_{\ZZ_2} U(1) \longrightarrow  SO(2n).
\]
Choose a local trivialization of $E$ over an open cover $\{U_i\}$ of $X$. Then the transition
functions 
\[
g_{ij}:  U_i\cap U_j \longrightarrow SO(2n).
\]
define an element in $H^1(X, \underline{SO(2n)})$ whose image under the Bockstein  exact sequence
\[
H^1(X, \underline{Spin(2n)}) \to H^1(X, \underline{SO(2n)}) \to H^2(X, \ZZ_2)
\]
is the second Stieffel-Whitney class $w_2(E)$ of $E$.  The corresponding $\PU(\cH)$-bundle
is defined by the following twisting 
\[
W_3(E):=W_3\circ \nu_E: X \longrightarrow  \BSO(2n)  \longrightarrow   K(\ZZ, 3), 
\]
where $\nu_E$ is the classifying map of $E$ and $W_3$ is the classifying map
of the principal $U(1)$-bundle $\BSpin^c (2n) \to \BSO (2n)$.

Note that the Spin bundle gerbe can be equipped with a flat connection and trivial curving. 
Denote also by $w_2(E)$ the corresponding bundle gerbe 
with a flat connection and a trivial curving.   With respect to a good cover $\{U_i\}$
of $X$ the  differential twisting $w_2(E)$ defines a Deligne
cocycle 
$(\sigma_{ijk}, 0, 0)$
with  trivial local $B$-fields, here $\sigma_{ijk}  = \hat{g}_{ij} \hat{g}_{jk}\hat{g}_{ki}  $ where $\hat{g}_{ij}: U_{ij} \to Spin^c (2n)$  is  a lift of $g_{ij}$.

A  differential twisted K-class in $\check{K}^0(X,  w_2(E))$   can be represented by
a Clifford bundle, denoted $\cE$, equipped with a Clifford connection, 
where $E$ is also equipped with a $SO(2n)$-connection. 
Locally, over each $U_i$  we let
$\cE|_{U_i} \cong S_i\otimes \cE_i $
where $S_i$ is the local fundamental spinor bundle associated to $E|_{U_i}$ with the standard Clifford  action
of $\Cl(E|_{U_i})$ obtained from the fundamental representation of $Spin(2n)$.  Then $\cE_i$ is a complex vector bundle
over $U_i$ with a connection $\nabla_i$  such that
\[
ch(\cE_i, \nabla_i)  = ch(\cE_j, \nabla_j). 
\]
Hence, our construction of a differential  Chern  character  implies that  the twisted Chern character 
\[
Ch_{w_2(E)}: K^0(X, W_3(E)) \longrightarrow  H^{ev} (X) 
\]
is given by  sending $[\cE]$ to  $\{[ch(\cE_i, \nabla_i)] = ch(\cE_i)\}$.  If $E$ is equipped with a $Spin^c$ structure
whose determinant line bundle is $L$,  there is a canonical
isomorphism 
\[
K^0(X) \longrightarrow  K^0(X, W_3(E)), 
\]
given by $[V] \mapsto [V\otimes S_E]$ where $S_E$ is the associated spinor bundle of $E$. Then we have
\[
Ch_{w_2(E)} ([V\otimes S_E]) =  e^{\frac{c_1(L) }{2}} ch ([V]),
\]
where $ch([V])$ is the ordinary Chern character of $[V] \in K^0(X)$. 

If $X$ is an even dimensional
Riemannian manifold, and $TX$ is equipped with the Levi-Civita
connection,  we can identify $K^0(X,  W_3(E))$ with  the 
Grothendieck group of Clifford modules, denoted  $K^0(X, \Cl(TX))$, of the bundle of Clifford algebras $\Cl(TX)$. Then  
\ba\label{TX}
Ch_{w_2(X)} ([\cE]) =  ch(\cE/S)
\na
where $ch(\cE/S)$ is the relative  Chern character of the Clifford module $\cE$ constructed
in Section 4.1 of  \cite{BGV}.   
 In particular, if $X$ is equipped with a $Spin^c$ structure (Cf. \cite{LM}), let its 
canonical class be  $c_1$, and its complex spinor bundle associated to
the fundamental representation of $Spin^c(2n)$ be $S(X)$. Then  any Clifford module  $\cE$ can be 
written as $V\otimes S(X)$, whose twisted Chern character  is given by
\[
Ch_{w_2(X)} (\cE)  = e^{\frac{c_1}{2}} ch (V),
\]
where $ch(V)$ is the ordinary Chern character.  Note that the vector bundle  $V$ in the decomposition 
$\cE =V \otimes S(X)$ depends on the choice of $Spin^c$ structure, but 
$e^{\frac{c_1}{2}} ch (V)$ depends only on $\cE$.
\end{example}

 \section{Riemann-Roch theorem in twisted K-theory}

  A smooth map $f: X\to Y$ is called K-oriented
if $TX \oplus f^*TY$ is equipped with a Spin$^c$ structure which is determined by a choice
of $c_1\in  H^2(X, \ZZ)$ 
such that
\[
w_2(X)  -  f^*w_2(Y)  =  c_1 \mod 2 .
\]
The  push-forward map $f_{!}^{c_1}: K^*(X) \to K^{*-d(f)}(Y)$, also called the Gysin homomorphism in K-theory,  is well-defined, here $d(f) = (\dim Y-\dim X)\mod2$. 
The  Riemann-Roch theorem (Cf. \cite{AtiHir}) is  given by the following formula
  \ba\label{AH}
  Ch\bigl(f_{!}^{c_1}(a)\bigr) \hat{A}(Y) =  f_*^H\bigl(Ch(a)e^{\frac{c_1 }{2}}\hat{A}(X)\bigr),
  \na
  where   $a\in K^*(X)$,  and $f_*^H$ is the Gysin homomorphism in ordinary cohomology theory,
  $Ch$ is the Chern character for ordinary complex K-theory,  $\hat{A}(X)$ and $\hat{A}(Y)$ are the
  A-hat classes (which can be expressed in terms of the Pontrjagin classes) of $X$ and $Y$ respectively.
 
  Let $W_3(f)$
     represent  the image of $f^*(w_2(Y))- w_2(X)   $
   under the Bockstein homomorphism
 $H^2(X, \ZZ_2) \longrightarrow H^3(X, \ZZ).$
In \cite{CW2}, we showed that there is a natural push-forward map (\ref{push:forward})   \[
 f^K_!:   K^i \bigl(X, f^*\sigma  + W_3(f)\bigr) \longrightarrow K^{i+ d(f)}(Y, \sigma),
 \]
 associated to 
any differentiable map $f: X\to Y$ and any $\sigma: Y \to K(\ZZ, 3)$.
    The push-forward map is constructed by choosing a closed embedding 
   $\iota: X\to Y \times \RR$ such that $f= \pi\circ \iota$ where $\pi$ is  the projection.  The push-forward is functorial in the sense that
   \[
   f_!^K =   \pi_!^K\circ \iota_!^K,
   \]
   where the push-forward
   map $\pi_!^K$ is given by the Bott periodicity of  twisted K-theory.  So we may assume that
   $f: X\to Y $ is a closed embedding  and    $X$ and $Y$ are equipped with compatible Riemannian metrics.

   In this Section we will establish the Riemann-Roch theorem in twisted K-theory for any closed
   embedding $f:X\to Y$ with a twisting $\sigma: Y \to K(\ZZ, 3)$ (the classifying map for 
   $\cP_\sigma$).    We do this in two stages, represented by Theorems 4.1 and 4.4.
   Theorem 4.1 is a special case (that we need in this paper) which we will prove by a method that may be easily modified
   to give the general case (Theorem 4.3).
   Recall the notation
  $ \check{\sigma} = (\cG_\sigma, \theta, \omega)$
    for a differential twisting and $H$ for  the  normalized curvature of  $\check{\sigma}$.   We use $w_2(f)$ to denote the Spin bundle gerbe  with
    a trivial connection and a trivial curving   on $X$ associated to the normal bundle $N_f$ of the embedding $f$ (Cf. Example \ref{example}). 
   
      We first   assume that $f: X\to Y $ is K-oriented, that is,    
 the normal  bundle 
$ N_f \cong f^*TY /TX $
   of $f$ is equipped with a $Spin^c$ structure.        Under this assumption, we have the following
   Riemann-Roch theorem  in twisted K-theory for a K-oriented embedding $f: X\to Y$ and $[f^*\sigma]=0$.  For simplicity, we assume that $\dim Y- \dim X = 0 \mod 2$, otherwise, we can replace $Y$ by
   $Y\times \RR$.
   
 \begin{theorem}\label{RR:1}  Let $f: X\to Y$ be a closed embedding whose normal bundle is equipped
 with a $Spin^c$ structure. Assume that $\dim Y- \dim X =2n$ and  $f^*[\sigma]  =0$. 
   Let   $a\in K^0(X, f^*\sigma)$, and $c_1(f)$ be the canonical class of the $Spin^c$ structure
  on $N_f$. Let   $f_*^H$ be the Gysin homomorphism in twisted cohomology theory,  and $Ch_{\check{\sigma}}$ and $Ch_{f^*\check{\sigma}}$  be
   the twisted Chern characters   on $K^0(Y,\sigma)$ and $K^0(X, f^*\sigma)$
   respectively.  
 Then 
\[
Ch_{\check{\sigma}} \bigl(f_{!}^{K}(a)\bigr) \hat A (Y)    =  f_*^H\bigl(Ch_{f^*\check{\sigma}} (a)e^{\frac{-c_1(f) }{2}}\hat{A}(X)\bigr).
\]
  \end{theorem}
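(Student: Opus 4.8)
The plan is to factor both the push-forward $f^K_!$ and the cohomological Gysin map $f^H_*$ through the Thom isomorphism of the normal bundle $N_f$ and the extension-by-zero map of a tubular neighbourhood, to use the hypothesis $f^*[\sigma]=0$ to trivialise the twisting in that neighbourhood, and thereby to reduce the identity to the classical Atiyah--Hirzebruch formula (\ref{AH}) together with the well-known formula for the Chern character of the $Spin^c$ Thom class. Throughout, $\pi\colon N_f\to X$ denotes the projection and $s\colon X\to N_f$ the zero section.

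First I would fix a tubular neighbourhood of $f(X)$ in $Y$, that is, an open embedding $j\colon U\hookrightarrow Y$ together with a diffeomorphism $U\cong N_f$ under which $f=j\circ s$. Because $N_f$ carries a $Spin^c$ structure we have $W_3\circ\nu_{N_f}=0$, so the Thom isomorphism (\ref{Thom:CW}) reads $\mathrm{Th}_{N_f}\colon K^0(X,f^*\sigma)\cong K^0(N_f,\pi^*f^*\sigma)$, explicitly $a\mapsto\pi^*a\cdot\lambda_{N_f}$, with $\lambda_{N_f}$ the untwisted $Spin^c$ Thom class determined by $c_1(f)$; by construction (cf.\ Section 4 above and \cite{CW2}) the push-forward for the closed embedding $f$ is $f^K_!=j_!\circ\mathrm{Th}_{N_f}$, where $j_!$ is extension by zero (legitimate because $X$ is compact, so $\mathrm{Th}_{N_f}(a)$ has compact support in $N_f$). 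Similarly, since $N_f$ is oriented, $f^H_*=j_*\circ\phi^H_{N_f}$ in twisted de Rham cohomology, $\phi^H_{N_f}$ being the Thom isomorphism for $N_f$. Moreover, as $U$ deformation retracts onto $f(X)$ and $f^*[\sigma]=0$, the restriction $\sigma|_U$ is topologically trivial; after refining the good cover, the Deligne $2$-cocycle $(\sigma_{ijk},A_{ij},B_i)$ of $\check{\sigma}$ restricted to $U$ is a Deligne coboundary, so there is a Hermitian line bundle with connection on $U$ trivialising $\check{\sigma}|_U$, and likewise (restricting along $s$) trivialising $f^*\check{\sigma}$ on $X$. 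This makes all the local data of Section 3 explicit near $f(X)$: the spectral subbundles $E_{ij}$ with their natural connections and the forms $\bar\omega_{ij}=e^{B_j}ch(E_{ij},\nabla_{ij})$, $\eta_i=\sum_k\rho_k\bar\omega_{ki}$ and $(d-H)\eta_i$ entering the tic-tac-toe construction of $ch_{\check{\sigma}}$ are all supported in $U$ and are the restrictions of the corresponding data chosen on $Y$.

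The two facts doing the real work are naturality statements for the twisted Chern character of Section 3, and these are the main obstacle. The first is \emph{multiplicativity}: for the pairing $K^0(\,\cdot\,,\tau)\otimes K^0(\,\cdot\,)\to K^0(\,\cdot\,,\tau)$ one has $Ch_{\tau}(u\cdot v)=Ch_{\tau}(u)\cdot ch(v)$, whence, using also naturality under $\pi^*$, $Ch_{\pi^*f^*\check{\sigma}}(\pi^*a\cdot\lambda_{N_f})=\pi^*\!\big(Ch_{f^*\check{\sigma}}(a)\big)\cdot ch(\lambda_{N_f})$. The second is \emph{compatibility with extension by zero}: $Ch_{\check{\sigma}}\big(j_!(b)\big)=j_*\big(Ch_{\pi^*f^*\check{\sigma}}(b)\big)$ in $H^{ev}(Y,d-H)$, a class represented by a form supported in $U$. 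The second follows from the locality of the tic-tac-toe construction observed in the previous paragraph; the cleanest route to the first is to pass to the universal model of Theorem \ref{universal:0}, where $Ch_{\check{\sigma}}(f,\nabla)=e^{B_i}\,\text{tr}\,e^{F'_{\nabla,i}/2\pi i}$ is computed from a connection on the associated $\mathfrak{G}$-bundle, so that multiplicativity is the usual Chern--Weil product formula, the central part of the curvature simply factoring off as $e^{B_i}$.

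Granting these, I would conclude as follows. Writing $ch(\lambda_{N_f})=\phi^H_{N_f}(\tau_{N_f})$ and using the projection formula for $\phi^H_{N_f}$, the two naturality statements give $Ch_{\check{\sigma}}\big(f^K_!(a)\big)=j_*\,\phi^H_{N_f}\big(Ch_{f^*\check{\sigma}}(a)\cdot\tau_{N_f}\big)$. Now substitute the standard formula $\tau_{N_f}=e^{-c_1(f)/2}\,\hat A(N_f)^{-1}$ for the Chern character of the $Spin^c$ Thom class (Atiyah--Hirzebruch \cite{AtiHir}; cf.\ the relative Chern character in Section 4.1 of \cite{BGV}), the sign of $c_1(f)/2$ being fixed by the chosen normalisation of the canonical class of the $Spin^c$ structure on $N_f$ (compare Example \ref{example}); use $\hat A(Y)|_U=\pi^*\big(\hat A(X)\,\hat A(N_f)\big)$, which follows from $TU\cong\pi^*TX\oplus\pi^*N_f$; and use the projection formula $j_*(\xi)\cdot\hat A(Y)=j_*\big(\xi\cdot j^*\hat A(Y)\big)$ for the open embedding. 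Then
\[
Ch_{\check{\sigma}}\big(f^K_!(a)\big)\,\hat A(Y)
= j_*\,\phi^H_{N_f}\!\Big(Ch_{f^*\check{\sigma}}(a)\,e^{-c_1(f)/2}\,\hat A(N_f)^{-1}\,\hat A(X)\,\hat A(N_f)\Big)
= f^H_*\!\Big(Ch_{f^*\check{\sigma}}(a)\,e^{-c_1(f)/2}\,\hat A(X)\Big),
\]
which is the assertion. As a consistency check, when $[\sigma]=0$ with trivial curving this is exactly (\ref{AH}) for the embedding $s$ followed by $j$; this also yields an alternative derivation, since trivialising $\sigma|_U$ by the line bundle above and transporting the statement to ordinary K-theory on $U$ and on $X$ reduces it verbatim to (\ref{AH}), the correction factors on $U$ and on $X$ being compatible under $s^*$.
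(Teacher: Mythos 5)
Your argument is correct in outline, and in fact your closing ``alternative derivation'' is precisely the paper's proof: the authors use $f^*[\sigma]=0$ (together with the $Spin^c$ structure, via the homotopy in (\ref{twisted-spinc})) to trivialise $f^*\cP_\sigma$, so that on $X$ and on the tubular neighbourhood the Deligne data reduce to globally defined $2$-forms $B_X$, $B_f$; the twisted Chern character then becomes $e^{B}\cdot ch$ of an ordinary class (diagrams (\ref{ch:1}), (\ref{ch:2})), the open embedding $\iota\colon N_f\to Y$ contributes the commutation (\ref{ch:3}) — your ``compatibility with extension by zero'' — and the whole identity collapses to the classical defect formula (\ref{ch:4}) for the zero section, i.e.\ to Atiyah--Hirzebruch (\ref{AH}), exactly as you sketch. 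Your primary route is genuinely different in its key lemma: instead of trivialising $\check{\sigma}$ near $X$, you stay in the twisted setting, write $f^K_!=j_!\circ\mathrm{Th}_{N_f}$ with $\mathrm{Th}_{N_f}(a)=\pi^*a\cdot\lambda_{N_f}$, and invoke multiplicativity $Ch_{\tau}(u\cdot v)=Ch_{\tau}(u)\,ch(v)$ together with $ch(\lambda_{N_f})=\phi^H_{N_f}\bigl(e^{-c_1(f)/2}\hat A(N_f)^{-1}\bigr)$. That route is attractive because it does not really use $f^*[\sigma]=0$ in the main computation and so points directly toward the general Theorem \ref{RR:2}; but be aware that the multiplicativity of the twisted Chern character \emph{as constructed in Section 3} (spectral cuts, partitions of unity, tic-tac-toe) is nowhere proved in the paper, and your proposed fix — computing instead with the Chern--Weil character $e^{B_i}\mathrm{tr}\,e^{F'_{\nabla,i}/2\pi i}$ from the universal $\mathfrak{G}$-bundle of Theorem \ref{universal:0} — silently requires knowing that the two constructions induce the same map $K^0(X,\sigma)\to H^{ev}(X,d-H)$, which the paper also does not establish. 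So your main route carries an extra (plausible but unproven) compatibility lemma that the paper's trivialisation argument avoids, while the paper's proof uses the hypothesis $f^*[\sigma]=0$ more heavily and gains elementariness: everything reduces to untwisted Chern--Weil identities conjugated by $e^{B_X}$ and $e^{B_f}$.
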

 \begin{proof}
  Choose a local  trivialization of $\cP_\sigma$ which defines  a degree two Deligne   cocycle  $ (\sigma_{ijk}, A_{ij}, B_i)$ on $Y$  associated to $\check{\sigma} $ with respect to
  an open cover $\{U_i\}$ and transition functions
$  g_{ij}:  U_i\cap U_j \longrightarrow  \PU(\cH).$
   The conditions  $  f^*[\sigma] =[W_3(f) ]=0$  imply  that   there is a  homotopy commutative diagram
   (Cf. \cite{Wang})
\ba\label{twisted-spinc}
\xymatrix{X \ar[d]_{f} \ar[r]^{\nu_f} &
\BSO (2n)
 \ar@2{-->}[dl]_{\eta} \ar[d]^{W_3} \\
Y \ar[r]_\sigma  & K(\ZZ, 3). } 
\na
Here  $\nu_f$ is   the classifying map of  the   normal bundle of $f$,  
$W_3$ is the classifying map of the principal 
$BU(1)$-bundle $\BSpin^c (2n) \to \BSO (2n)$  and   $\eta$ is a homotopy
between $W_3 \circ \nu_f $ and $\sigma \circ f $.

The  homotopy commutative diagram
  (\ref{twisted-spinc}) and the $Spin^c$ structure on $N_f$ define a 
  trivialization of $f^*\cP_\sigma$ given by a section $\psi: X \to f^*\cP_\sigma$.
  With respect to the  induced open cover from $\{U_i\}$ and the 
  trivialization of $f^*\cP_\sigma$, the transition functions $\{g_{ij}\}$ can be written as 
  \[
  g_{ij} = h_i h_j^{-1}
  \]
  for $h_i: \tilde{U}_i = X\cap U_i \to \PU(\cH)$.  Then the pull-back differential 
twist   $f^* \check{\sigma} $  defines a degree 2 Deligne cocycle
  \[
   (1, A_j-A_i, B_i)
   \]
   such that $B_i-dA_i$ is a globally defined 2-form on $X$, denoted by $B_X$, and $dB_X = f^*H$ on $X$. Note that such a  Deligne cocycle  can be extended to a tubular neighborhood of $X$ in $Y$
   with the 2-form denoted by $B_f$.

  Given a  twisted K-class in $K^0(X, f^* \sigma)$, represented by a locally defined
  map $\psi_i: \tilde{U}_i \to \Fred (\cH)$, we have 
  \[
  h_i^{-1} \psi_i h_i =  h_j^{-1} \psi_j h_j,
  \]
  which implies that $\{  h_i^{-1} \psi_i h_i\}$ defines a continuous map 
  $\psi:  X\to \Fred (\cH)$. Our construction of the differential twisted Chern character 
  shows that
  \[
  ch_{f^* \check{\sigma} } (\{ \psi_i\} ) = \exp (B_X) ch (\psi)
  \]
  where $ch$ is the ordinary Chern character form of $\psi$. Hence we 
  have established the existence of
  the following commutative diagram relating the ordinary Chern character  on $K^0 (X )$  and
  the twisted Chern character on $K^0 (X, f^*\sigma )$ respectively
  \ba\label{ch:1}
  \xymatrix{
  K^0 (X ) 
 \ar[rr]^{\cong}  \ar[d]^{Ch}   
 & &  K^{0}(X, \ f^*\sigma )  \ar[d]^{Ch_{f^*\check{\sigma} }}  \\
 H^{ev} (X, d)  \ar[rr]^{\exp (B_X)}_{\cong} &&  H^{ev} (X, d- f^*H) .
  }
 \na
 Note that the above arguments also work for $N_f$  which is identified with the tubular neighborhood
 of $X$ in $Y$. Denote by $\iota: N_f \to Y$ the open embedding, so we have a similar 
 commutative diagram relating the ordinary Chern character  on $K^0 (N_f )$  and
  the twisted Chern character on $K^0 (N_f , \iota^*\sigma )$ respectively
  \ba\label{ch:2}
 \xymatrix{
  K^0 (N_f ) 
 \ar[rr]^{\cong}  \ar[d]^{Ch}   
 & &  K^{0}(N_f,   \iota^*\sigma )  \ar[d]^{Ch_{\iota^*\check{\sigma} }}  \\
 H^{ev} (N_f, d)  \ar[rr]^{\exp( B_f) } _{\cong}&&  H^{ev} (N_f, d- \iota^*H) .
  }
 \na

For the open embedding $\iota: N_f \to Y$, the twisted Chern characters commute with the
push-forward map $\iota_!^K$, so the following diagram commutes
\ba\label{ch:3}
  \xymatrix{
  K^{0}(N_f, \iota^*\sigma )   
 \ar[rr]^{\iota^K_!}  \ar[d]^{Ch_{\iota^*\check{\sigma} }   }
 & &  K^{0}(Y,  \sigma )  \ar[d]^{Ch_{ \check{\sigma} }}  \\
 H^{ev} (N_f,  d- \iota^*H)  \ar[rr]_{\iota_!^H} &&  H^{ev} (Y, d-  H) .
  }
 \na

Introduce the zero section    $o: X\to N_f$,  
so that $f=\iota\circ o$, and the Riemann Roch theorem for $o$
gives  what we call the 
`Chern character defect diagram'
 \ba\label{non-commu}
  \xymatrix{
  K^0 (X ) 
 \ar[rr]^{o_!^K}  \ar[d]^{Ch}   
 & &  K^{0}(N_f  )  \ar[d]^{Ch }  \\
 H^{ev} (X, d)  \ar[rr]_{o_! }^{\cong} &&  H^{ev} (N_f, d ), 
  }
\na
  which  is not commutative.  Here the Gysin homomorphism $o_!$ is the
  Thom isomorphism.   The non-commutativity of (\ref{non-commu})  is  described  by the following
  formula
 \ba\label{ch:4}
  Ch\bigl( o_!^K (a)  \bigr) \hat A (N_f) =  o_!^H \bigl(Ch (a)  e^{\frac{-c_1(f)}{2}} \hat{A}( X) \bigr), 
  \na
for any $a\in K^0(X)$ where the homomorphism $o_!^H$ on twisted cohomology theory
is 
\[
o_!^H = \exp (B_f) \circ   o_! \circ \exp(-B_X) : H^{ev}(X, d-f^*H) \longrightarrow
 H^{ev} (N_f,  d- \iota^*H) .
 \]
 The Gysin homomorphism $f_!^H$ in twisted cohomology theory
is given by $\iota_!^H  \circ o_!^H$.

  The commutative  diagrams (\ref{ch:1}),
   (\ref{ch:2}),  (\ref{ch:3}) and the formula (\ref{ch:4}) imply that
   \[
  Ch_{\check{\sigma}} \bigl(f_{!}^{K}(a)\bigr) \hat A (Y)    =  f_*^H\bigl(Ch_{f^*\check{\sigma}} (a)e^{\frac{-c_1(f) }{2}}\hat{A}(X)\bigr),
\]
for any  $a\in K^0(X, f^*\sigma)$. Here we have applied the identity $\hat A(N_f) = 
\iota^* \hat A (Y)$. 
  \end{proof}

 \begin{remark} In ordinary K-theory, for  a closed embedding $f:X\to Y$  whose normal bundle is equipped with a $Spin^c$ structure $c_1(f)$, then $f$ is $-c_1(f)$-oriented as in \cite{AtiHir}, and 
 Theorem \ref{RR:1} agrees with the Atiyah-Hirzebruch's Riemann-Roch theorem. 
 \end{remark}

  \begin{remark}\label{remark:RR} Under the assumptions of Theorem   \ref{RR:1},
  for a  closed  Riemannian embedding $f: X\to Y$ whose normal bundle is equipped
   with a Hermitian  structure,  we have an integral   version
   of the Riemann-Roch Theorem \ref{RR:1}. To describe this, 
 let $E$ be an Hermitian vector bundle with  Hermitian connection $\nabla^E$ over $X$. Let
$\nabla^{TX}$ and $\nabla^{TY}$  denote the Levi-Civita
connections on $TX$ and $TY$ respectively, and  $\nabla_f$ be an Hermitian connection on $N_f$. 
Then
\ba\label{diff:RR}
 \int_Y ch_{\check{\sigma}} \bigl(f_{!}^{K}(E, \nabla^E)\bigr) \hat A (Y, \nabla^{TY})    = 
 \int_X ch (E, \nabla^E) e^{\frac{-c_1(N_f, \nabla_f) }{2}}\hat{A}(X, \nabla^{TX}),
\na
The equality (\ref{diff:RR}) follows from our construction of  
the twisted Chern character form and the differential version of 
the  diagrams (\ref{ch:1}),
   (\ref{ch:2}),  (\ref{ch:3}) and the formula (\ref{ch:4}).
\end{remark}

 The proof of Theorem \ref{RR:1} and arguments in Example \ref{example} 
 can be adapted to give the following general Riemann-Roch theorem in twisted K-theory. We leave the proof to dedicated readers.

 \begin{theorem}\label{RR:2}  Let $f: X\to Y$ be a closed embedding.
  Assume that $\dim Y- \dim X =2n$. Then 
\[
  Ch_{\check{\sigma}} \bigl(f_{!}^{K}(a)\bigr) \hat{A} (Y)   =  f_*^H\bigl(Ch_{f^*\check{\sigma}+w_2(f)} (a)\hat{A} (X)\bigr), 
\]
for any $a\in K^0 (X,  f^*\sigma +  W_3(f) ) $. 
 Equivalently, the following diagram commutes
     \ba\label{RR:diagram:2}
    \xymatrix{
  K^0 (X,  f^*\sigma +  W_3(f) ) 
 \ar[rr]^{f^K_!}  \ar[d]_{Ch_{ f^*\check{\sigma}+w_2(f) } (-) \hat A(X) }   
 &&   K^{0}(Y, \sigma)  \ar[d]^{Ch_{\check{\sigma} }  (-) \hat A(Y)}  \\
 H^{ev} (X, f^*H)    \ar[rr]_{f^H_*} &&  H^{ev} (Y,  H) .
  }
   \na
   In particular, if $\sigma=W_3(Y)$, then  the Riemann-Roch formula is given by 
   \[
  Ch_{w_2(Y)} \bigl(f_{!}^{K}(a)\bigr) \hat{A} (Y)   =  f_*^H\bigl(Ch_{w_2(X) } (a)\hat{A} (X)\bigr).
\]
for any $a\in K^0 (X,    W_3(X) ) $. 
  \end{theorem}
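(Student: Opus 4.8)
The plan is to run the proof of Theorem~\ref{RR:1} essentially verbatim, replacing the two places where the $Spin^c$ hypothesis on the normal bundle was used --- the isomorphism $K^0(X)\cong K^0(X,f^*\sigma)$ of~(\ref{ch:1}) and the ``Chern character defect'' formula~(\ref{ch:4}) --- by the twisted Thom isomorphism~(\ref{Thom:CW}) and the relative Chern character of Clifford modules recalled in Example~\ref{example}. As in that proof, I would first choose a tubular neighbourhood of $X$ in $Y$ and identify it with the total space of the normal bundle $N_f$, so that $f$ factors as $f=\iota\circ o$, where $o\colon X\to N_f$ is the zero section and $\iota\colon N_f\hookrightarrow Y$ is the open embedding. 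Since the twisted push-forward is functorial, $f^K_!=\iota^K_!\circ o^K_!$, and likewise $f^H_*=\iota^H_*\circ o^H_*$ on twisted cohomology, so it suffices to establish the Riemann--Roch identity for $\iota$ and for $o$ separately and then splice the resulting commutative squares, using $\hat A(N_f)=\iota^*\hat A(Y)$.

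For the open embedding $\iota$ I would reuse the argument behind diagram~(\ref{ch:3}): trivializing $\cP_\sigma$ over a good cover of $Y$ and pulling the resulting Deligne cocycle back to the tubular neighbourhood, one sees that the twisted Chern character intertwines $\iota^K_!$ with the twisted Gysin map $\iota^H_*$, and this step uses no $Spin^c$ structure at all. Combined with $\hat A(N_f)=\iota^*\hat A(Y)$, this reduces the theorem to the Riemann--Roch identity for the zero section $o\colon X\to N_f$. Now $o^K_!$ is precisely the twisted Thom isomorphism~(\ref{Thom:CW}) applied to the rank-$2n$ oriented bundle $N_f$ with $\a=f^*\sigma$, namely $K^0(X,f^*\sigma+W_3(f))\cong K^0(N_f,\pi^*f^*\sigma)$ where $\pi\colon N_f\to X$; here the grading is not shifted because $2n$ is even, and $\pi^*f^*\sigma\simeq\iota^*\sigma$ since $\iota\simeq f\circ\pi$. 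So the remaining task is to show that under this isomorphism
\[
Ch_{\iota^*\check\sigma}\bigl(o^K_!(a)\bigr)\,\hat A(N_f)\;=\;o^H_*\bigl(Ch_{f^*\check\sigma+w_2(f)}(a)\,\hat A(X)\bigr),
\]
where $o^H_*=\exp(B_f)\circ o_!\circ\exp(-B_X)$ as in the proof of Theorem~\ref{RR:1}.

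The hard part will be precisely this last identity, which carries the genuinely non-$K$-oriented content of the theorem. Since $N_f$ need not admit a $Spin^c$ structure, one cannot split off a scalar factor $e^{-c_1(f)/2}$ as in~(\ref{ch:4}); instead one must argue locally with Clifford modules. Over a good cover $\{U_i\}$ of $X$ that trivializes $N_f$, the twisting $w_2(f)$ is given by the Deligne cocycle $(\sigma^N_{ijk},0,0)$ of Example~\ref{example}, a class $a\in K^0(X,f^*\sigma+W_3(f))$ is represented by local $\Cl(N_f|_{U_i})$-modules $\cE_i\cong S_i\otimes E_i$ whose $E_i$ patch up with the $e^{B_j-B_i}$ twists coming from the lifting gerbe, and one has $Ch_{f^*\check\sigma+w_2(f)}(a)|_{U_i}=e^{B_i}\,ch(\cE_i/S)$ with $ch(\cE_i/S)$ the relative Chern character of~\cite{BGV}. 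I would then compute the zero-section push-forward of each local Clifford module by the standard Chern--Weil/Thom argument --- this is where $\hat A(X)$ and $\hat A(N_f)^{-1}$ enter --- and verify that, after multiplying through by the $e^{B_i}$ factors and patching, the outcome is exactly $Ch_{\iota^*\check\sigma}(o^K_!(a))\,\hat A(N_f)$ on the nose. The delicate bookkeeping is the interplay of the $\delta$-cocycle relations for the $\{\cE_i\}$ against the local curvings $B_i$, together with the check that the locally defined forms $e^{B_i}\,ch(\cE_i/S)$ glue to a globally defined $(d-f^*H)$-closed form; this is the natural twisted analogue of the Riemann--Roch calculation for Clifford modules, and once it is in place, reassembling the three commutative squares gives the stated formula. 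Finally, the special case $\sigma=W_3(Y)$ follows because $f^*W_3(Y)+W_3(f)$ represents the Bockstein image of $f^*w_2(Y)+\bigl(f^*w_2(Y)-w_2(X)\bigr)=w_2(X)\bmod 2$, i.e.\ it equals $W_3(X)$.
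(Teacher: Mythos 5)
Your proposal is essentially the paper's own route: the authors give no written proof of Theorem~\ref{RR:2}, saying only that it follows by adapting the proof of Theorem~\ref{RR:1} together with the Clifford-module arguments of Example~\ref{example}, and that is exactly what you do --- factor $f=\iota\circ o$ through the tubular neighbourhood, keep the squares that use no $Spin^c$ structure, and replace the isomorphism (\ref{ch:1}) and the defect formula (\ref{ch:4}) by the twisted Thom isomorphism (\ref{Thom:CW}) and the relative Chern character $ch(\cE_i/S)$ of \cite{BGV}, with the correct mod-$2$ argument for the special case $\sigma=W_3(Y)$. The one step you leave as a sketch (the zero-section defect formula in terms of the glued forms $e^{B_i}\,ch(\cE_i/S)$) is precisely the computation the authors also leave to the reader, so your plan matches, and slightly exceeds, the level of detail of the paper's treatment.
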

\vspace{2mm}

\section{Applications}

 In this Section, we apply our differential twisted K-theory and Riemann-Roch  theorem to investigate
twisted K-theory of simply connected simple Lie groups. 
 From the result of Douglas (\cite{Dou}) and Braun (\cite{Braun}), we know that the twisted K-theory of a simply connected simple Lie group of rank $n$ is an exterior
algebra on $(n-1)$ generators tensor   a cyclic group.  The order of the cyclic group can be
described in terms of the dimensions of irreducible representations of
$G$.   
So the twisted Chern characters are all zero for the twisted K-theory of a simply connected simple Lie group. We will apply
twisted Chern character forms to study these torsion elements in twisted K-theory.

Let $G$ be a simply connected compact simple Lie group of rank $n$ and $T$ be a maximal torus of $G$. The sets of
simple roots and fundamental  weights of $G$ with respect to $T$ are denoted by
\[
\{\a_1, \a_2 \cdots, \a_n\}, \qquad \{\l_1, \l_2 \cdots, \l_n\}
\]
respectively.  Let $\bf k$ denote the twisting $G\to K(\ZZ, 3)$
corresponding to the integer $k$ in $H^3(G, \ZZ)\cong \ZZ$ with the corresponding 
$\PU(\cH)$ bundle $\cP_{\bf  k}$.

 In \cite{CW1}, a canonical trivialization of the associated bundle
gerbe $\cG_{\bf  k}$ over certain conjugacy classes in $G$ is constructed. Those conjugacy classes are  
diffeomorphic to $G/T$, called symmetric D-branes in \cite{BouDaw}, \cite{FFFS}, \cite{FS}, \cite{Gaw} and labeled by dominant weights of level $k$.  For each simple root $\a_i$, the corresponding  subgroup 
   $SU(2)_{\a_i}$  intersects with these D-branes  at 
$S^2_{\a_i}$, which splits $SU(2)_{\a_i}$ into two 3-dimensional  balls $D_i^+$ and $D_i^-$.  
Denote by $C_i = C_i^+\cup C_i^-$ the CW complex obtained by attaching these  two 3-dimensional  cells  $D_i^+$ and $D_i^-$ to $G/T$. 

Let $\iota_i: C_i \to G$ be the inclusion map. Then the restriction  $\iota_i^*{\bf  k} = {\bf  k} \circ \iota_i: C_i \to K(\ZZ, 3)$
defines the integer $k$ in  $H^3(C_i,  \ZZ)\cong \ZZ$. 
Note that $C_i^+\cap C_i^-  =  G/T$, and denote by $\iota_i^{\pm}: G/T \to C_i^\pm$ and 
$\iota: G/T \to G$  the
inclusion maps. The twisting  $k$ is trivial when restricted to $C_i^\pm$,  and the 
two  trivializations on $G/T$ differ by  a line bundle $L_{k\l_i}$ whose
first Chern class is given by $k\l_i \in H^2 (G/T)$.  We have the following Mayer-Vietoris sequence
 \ba\label{C_i}
 \xymatrix{
K^1 (C_i^+  )\oplus K^1( C_i^- )\ar[r]& 0 \ar[r]&   K^0 (C_i, {\bf  k} )
  \ar[d]\\
K^1 (C_i, {\bf  k} ) \ar[u] & K^0  (G/T ) \ar[l]^{\delta} &
 K^0  (C_i^+ ) \oplus K^0(C_i^-)     \ar[l]
 }
 \na
 where the map $K^0  (C_i^+ )   \oplus K^0(C_i^-)    \to  K^0  (G/T )  $ sends
 $([E_1] , [E_2]) $ to $[E_1] - [ L_{k\l_i}\otimes E_2]$. 
This implies that the push-forward map
\[
\iota_!^K:  K^0(G/T) \longrightarrow   K^{\dim G}(G, {\bf  k}) 
\]
 satisfies $\iota_!^K ([E]) = \iota_!^K ([L_{k\l_i}\otimes E])$ for any vector bundle $E$ over $G/T$.   
 
 Choose a  gerbe connection and a curving on  $\cP_{\bf  k}$ to get a differential
 twisting $\check{{\bf  k}}$  whose normalized curvature is $H$.   
As  the twisted Chern characters are zero on $K^*(G, \bf  {k})$, we see that  
\[
K^*(G, {\bf k}) \cong K^{*+1} (  G, {\bf  k}, \RR/\ZZ), 
\]
  which  is a subgroup of differential twisted K-theory $\check{K} (G, \check{{\bf  k}})$
by  Theorems  \ref{diff:twsitedK1} and \ref{diff:twsitedK0}, under a homomorphism sending
$[f] $ to an element  $[f, \eta] \in \check{K} (G, \check{{\bf  k}})$  such that $ch_{\check{{\bf  k}} } ( f ) = -   (d-H) \eta.$  Therefore, 
the differential twisted Chern character of   $\iota_!^K (E ) $, for a complex vector bundle $E$ 
over $G/T$,  has the form 
 $(d-H)\eta (E)$ for some differential form $\eta(E)$.  Then  we have 
 \[
\int_G (d-H)\eta (E) = \int_G  (d-H)\eta (L_{k\l_i} \otimes E) \mod c_G(k)
\]
where $c_G(k)$ is the cyclic order   of   $K^{\dim G}(G, {\bf  k})$. 

 As $G/T$ is a complex manifold, the normal bundle of $G/T$ in $G$ has a canonical 
 $Spin^c$ structure,  we have  the following 
 integral  version of our Riemann-Roch Theorem \ref{RR:1}  (Cf. Remark \ref{remark:RR})
 \ba\label{index}
\int_G (d-H)\eta( E)   = \int_{G/T} Ch(E) \Td (G/T) 
\na
where $\Td (G/T)$ is the Todd class of $G/T$. Hence, we obtain 
\[
 \int_{G/T} Ch(E) \Td (G/T)  =  \int_{G/T} Ch(L_{k\l_i} \otimes  E) \Td (G/T) \mod c_G(k),
 \]
 for $i=1, \cdots,  n$.   In particular, let $E$ be  a trivial line bundle, then we have 
 the following relations
 \ba\label{dim1}
  \int_{G/T} Ch(L_{k\l_i}) \Td (G/T) = 1 \mod    c_G(k),
  \na
  for $i=1, \cdots,  n$,  where the left hand side is the dimension of the irreducible representation of $G$ with  highest weight $k\l_i$.  We can repeat the above arguments  by attaching
  3-dimensional  cells along various $S_{\a_i}^2$ simultaneously and we  obtain
  the following general relations
\ba\label{dim2}
  \int_{G/T} ch(L_{k\l_{i_1} +k\l_{i_2} +\cdots +k\l_{i_m}}) \Td (G/T) = 1 \mod    c_G(k),
  \na
  for any subset $\{i_1, i_2, \cdots, i_m\}$ of $\{1, 2, \cdots, n\}$. Denote by
  $
  V(k\l_{i_1} +k\l_{i_2} +\cdots +k\l_{i_m})
  $
  the irreducible representation of $G$ with   highest  weight $k\l_{i_1} +k\l_{i_2} +\cdots +k\l_{i_m}$, then we have the following identities relating the cyclic order of
   $K^{\dim G}(G, {\bf  k})$ to the highest weight   irreducible representations of $G$
 \ba\label{dim}
 \dim V(k\l_{i_1} +k\l_{i_2} +\cdots +k\l_{i_m}) = 1 \mod    c_G(k),
  \na   
 for any subset $\{i_1, i_2, \cdots, i_m\}$ of $\{1, 2, \cdots, n\}$.

\subsection{The  $SU(2)$  case}

In this subsection  $X= SU(2)=S^3$.  It is sufficient to consider 
an open cover $U_0,U_1$ consisting of slightly extended hemispheres with 
an intersection homotopic  to $S^2.$ In this case 
it was shown in \cite{Ros} that
$K^1(SU(2), {\bf  k} ) \cong \ZZ_k,$
and the  cyclic order agrees with the relation given by (\ref{dim}).

In the twisted cocycle of vector bundles we have now only a single element
$$(E_{01}, \nabla_{01})  \longrightarrow  U_{01}$$
 with no condition on the vector bundle. The only topological 
information are the rank $n_{01}$ of the vector bundle (which could be negative
for formal differences of vector bundles) and the total degree on $U_{01},$ which 
corresponds to an integer $m$ times the Chern class of the basic complex line 
bundle on $S^2, $ represented by the first Chern form 
$$\omega^{[2]}_{01}=c_1(E_{01}, \nabla_{01}).$$ 

 The Chern character  form of $(E_{01}, \nabla_{01})$ modulo the Chern
character of 
$$ (E_1, \nabla_1)  - (E_0, \nabla_0)  \otimes L_{01},$$
where $L_{01}$ is a complex line bundle on the intersection of degree $k,$ 
is then equal to $\mathbb Z_k$ since both $E_0$ and $E_1$ is trivial and they are 
characterized by the ranks $n_0, n_1.$ This is actually just the standard argument 
using the Mayer-Vietoris sequence for two open sets.

Choosing a partition of unity $\rho_0, \rho_1$ subordinate to the open cover $U_i$ and 
forms $B_i$ with 
$$dB_i = H= k H_0,\qquad B_0 -B_1= c_{01}$$ 
 on the overlap $U_{01}= U_0\cap U_1$,  where
 $H_0$ is the normalized volume 
form on $SU(2)= S^3$,   we obtain the 
the $(d-H)$-exact form on $SU(2)$ representing the twisted Chern differential character form
$\Theta$,
\[
\begin{array} {lll}
\Theta_0 & =&  (d-H)[ \rho_1 n_{10} + \rho_1 (\omega^{[2]}_{10} + B_0 n_{10}) ] \\[2mm]
&=& n_{10} d\rho_1 + d\rho_1 \wedge \omega^{[2]}_{10}  + n_{10} d\rho_1 \wedge B_0.\end{array}
\]
on   $U_0$, and 
\[
\begin{array} {lll}\Theta_1 &=&  (d-H)[ \rho_0 n_{01} +\rho_0 (\omega^{[2]}_{01} +B_1 n_{01} )]\\[2mm] 
&=& n_{01} d\rho_0 + d\rho_0 \wedge \omega^{[2]}_{01}  + n_{01} d\rho_0 \wedge B_1.\end{array}
\]
on   $U_1.$   Note that $n_{01} =-n_{10} $, $d\rho_0 + d\rho_1 =0$  and 
$\omega^{[2]}_{10} + B_0 n_{10} = - (\omega^{[2]}_{01} +B_1 n_{01} )$  imply that
$\Theta_0= \Theta_1$ on $U_{01}$. As $H^{odd}(X, d-H ) =0$, we know
that
\[
\Theta = (d-H) ( \eta_{[0]}  + \eta_{[2]})
\]
for a globally defined even form $( \eta_{[0]}  + \eta_{[2]})$, called a twisted eta potential. We will
show that these twisted eta potentials are localized to certain conjugacy classes  
such that 
$$
\int_{SU(2)}\eta_{[0]} H_0 = \dfrac{m}{k}  \mod 1 
$$ using the Riemann-Roch theorem in twisted K-theory (Theorem \ref{RR:1}).
 
  A Fredholm operator realization of the  twisted K-theory
classes in $ K^1(SU(2), {\bf  k} )$ is obtained from the family of hamiltonians in a supersymmetric Wess-Zumino-Witten 
model, \cite{Mi}. Actually in this case all the classes can be realized as equivariant twisted K-theory
classes, equivariant under the conjugation action of $SU(2)$ on itself. 
We recall some basic properties of the Fredholm family  from \cite{MicPel}, \cite{FHT}.

We have self-adjoint Fredholm operators $Q_A$ in a fixed Hilbert space $\cH$ parametrized 
by $SU(2)$ connections $A$ (termed `vector potentials' in the physics literature)
 on the unit circle $S^1.$ These transform equivariantly 
under gauge transformations,
\[
 \hat g^{-1} Q_A \hat g = Q_{A^g} 
 \] 
where $g$ is an element of the loop group $LG$, $\hat g$ is a projective representation of 
the loop group in $\cH$, i.e., a representation of the standard central extension of level $k,$ 
and $A^g = g^{-1} A g +g^{-1} dg$ is the gauge transformed vector potential. 

Let $p:  \mathcal{A}   \to G$ be the canonical projection from the space of vector potentials on the 
circle to the group of holonomies around the circle; the fiber of this projection is the group
of based loops $\Omega G \subset LG.$  The spectrum of $Q_A$ depends only on the 
projection $p(A) \in G,$ by the equivariance property.

For certain  $A$ the operators $Q_A$ have kernels.
Those $A$ for which this is true are such that $p(A)$ is a 
conjugacy class $\cC_j$, the so called
`D-brane', 
in $G.$  The physics terminology is to say that the zero modes 
of the family are localised in $\cC_j$.

The conjugacy class is diffeomorphic to the sphere $S^2$ given by 
$$\cC_j = \{ ghg^{-1} | g \in G\}$$
where $h = e^{i\pi \frac{2j+1}{k} \sigma_3}$ with $\sigma_3 = diag(1,-1)$ and $2j=0,1,2, \dots k-2.$ 
The zero modes form a complex line bundle $L_j$ over $\cC_j$ with Chern class represented by 
$2j+1$ times the basic 2-form on $S^2 \cong  \cC_j.$ 

For the later example it is worth noticing here that under the map $p:  \mathcal{A}   \to G$,
  the conjugacy classes correspond to
 coadjoint orbits in the Lie algebra of the central extension of the loop algebra;
the space of vector potentials on the circle is the 
(non-centrally extended) loop algebra, the coadjoint
action corresponds to the gauge action on $\mathcal A.$

If we consider a non-zero eigenvalue $\lambda$ of $Q_A$ we can still 
conclude from the continuity of the family $Q_A$ as a function of $A$ that for sufficiently small 
values of $|\lambda|$ the eigenvectors corresponding to this eigenvalue are localized at 
a 2-sphere close to the 2-sphere $\cC_j$ defined by the zero modes and the eigenvectors corresponding 
to $\lambda$ form a complex line bundle of winding number $2j+1.$ 
Thus if we fix a small real 
number $0< \epsilon$ the spectral subspace $ -\epsilon < Q_A < \epsilon$ is a complex line 
in a tubular neighborhood $\tilde \cC_j$ of $\cC_j.$ 

Setting $$U_{\pm} = \{g \in G | \pm \epsilon 
\notin \Spec(Q_A), A\in p^{-1} (g) \},$$ the intersection $U_-\cap U_+$ consists of $\tilde \cC_j$ and 
two contractible components $D_{\pm}$ (upper and lower hemispheres in $S^3$).  
In this case we have only one spectral vector bundle $E_{-+}$ which is the extension of
$L_j$ to the tubular neighborhood $\tilde \cC_j$ and a trivial line bundle on the remaining 
components $D_{\pm}$ of $U_-\cap U_+.$ 
The constraint $0 < 2j+1 < k$ is compatible with the known result $K^1(SU(2), {\bf  k} ) = \mathbb Z/k \mathbb Z.$ 
The construction gives a realization for all elements
 except the neutral element in $K^1(SU(2), {\bf  k} )$. 
The neutral element can then be obtained for example as the sum of classes corresponding to
$2j+1 = 1$ and $2j+1= k-1.$  

Write  the normalized curvature  $H$ of $\check{{\bf  k}}$ as 
$ k H_0$, where $H_0$ represents the basic 3-form on $SU(2)$. 
As the twisted cohomology $H^{odd}(SU(2), d-  kH_0)=0$, instead we  study the differential
twisted Chern character form  by applying the 
Riemann-Roch theory (Theorem \ref{RR:1}). 

By (\ref{index}), we have 
\[
\begin{array}{lll}
\disp{\int_{SU(2)} } (d-H) (\eta_{[0]} + \eta_{[2]})    & = &  
\disp{  \int_{\cC_j} }  c_1(L_j, \nabla_j ) \mod k 
\\[4mm]
&=&  (2j+1)  \mod k.
\end{array}
\]
Thus  we have, as $H=kH_0$, 
\[
\int_{SU(2)}  \eta_{[0]} H_0 =  -\dfrac{2j+1}{k}\mod 1 .
\]
 Hence, the twisted eta potential distinguishes the twisted K-classes in $K^1(SU(2), {\bf  k})$.

\subsection{The $SU(3)$ case}
As the final example we study the odd K-group $K^1(SU(3), {\bf  k}).$  
We need a representation of the twisted affine Lie algebra $ A_2^{(2)}.$ 
Here the twist refers to an outer automorphism $\tau$ of $\bold{su(3\
)}$ with $\tau^2 =1.$
Then this algebra is defined as a 
central extension of the subalgebra $L_{\tau} G$ consisting of smooth maps $g:[0,\pi] \to \bold{su(3)}$ such 
that $g(\pi) = \tau(g(0)).$ 
In the case of $SU(3)$ we can
take $\tau$ to be given by complex conjugation of matrices.  This 
automorphism of the Lie algebra integrates 
to an automorphism of the group $SU(3),$ again given by complex conjugation. 

The loop algebra of $SO(3)$ is clearly a subalgebra of $L_{\tau} G$ and the central extension of the former 
on level $k$ is obtained as a restriction of the central extension of level $k$ of the latter.  

The gauge conjugation action sending
$Q_A \to \hat g^{-1} Q_A \hat g = Q_{A^g}$ is now given by the coadjoint right action 
$A\mapsto A^g$ where the `vector potential'  $A$ is an element in the dual $L_{\tau} \bold{su(3)}^*.$ The 
coadjoint orbits have been analyzed in \cite{Wendt} and are shown to
be equal to the so called \it twisted conjugacy classes \rm 
in $SU(3).$ Recall that in this
terminology a
twisted conjugacy class $C(h)$ corresponding to $h\in SU(3)$ is defined as
$$ C(h)=  \{gh\tau(g)^{-1} | g \in SU(3) \}.$$ 

The general formula for the (twisted) conjugacy class corresponding to the zero modes of the operators $Q_A$ is 
$h= \exp{[ -2\pi (\lambda^{\vee} + \rho^{\vee})/k]}$ where $\rho$ is half the sum of positive roots of the Lie algebra of
constant gauge transformations (which in this case is $\bold{so(3)}$) and 
$\lambda$ is the highest weight of an irreducible $\bold{so(3)}$ representation, \cite{FHT}. Here $\lambda^{\vee}  \in \bold{h}$ is 
the dual of a weight $\lambda \in \bold{h}^*,$ the duality is determined by the Killing form.  Note also that the level 
$k \geq \kappa,$ where $\kappa$ is the dual Coxeter number (in the case of $SU(n)$ this number is equal to
$n$).  The reason for this is that in the Wess-Zumino-Witten model construction $k=k' + \kappa$ where $k'$ is the level
of an arbitrary irreducible loop group representation and the shift $\kappa$ comes from a loop group representation
constructed from the Clifford algebra of the loop group. 

As shown in \cite{Stanciu} the twisted conjugacy classes $C_x$ defined by $h(x) = \exp{2x \rho^{\vee}}$  define a foliation of $SU(3)$ when
$0\leq x \leq \pi/4.$ Concretely, as a $3\times 3$  matrix,
$$ h(x) = \left( \begin{matrix} \cos(2x) & \sin(2x)  & 0 \\ -\sin(2x) & \cos(2x) & 0 \\ 
0 & 0 & 1 \end{matrix} \right). $$ 
When $x=0$ the twisted conjugacy class can be identified as the 5-dimensional space $M_5=SU(3)/SO(3), $
when $x=\pi/4$ it is $S^5 = SU(3)/SU(2)$ whereas for $0<x< \pi/4$ we obtain the 'seven brane' $M_7=SU(3)/ SO( 2).$ 

The class defined by the zero modes of $Q_A$ is the generic orbit $M_7$ corresponding to the parameter value
$x = (2j + 1)/k,$ where $2j=0,1,2 \dots k-3,$  the shift by 3 coming from the dual Coxeter number of $SU(3).$ 
In the case of the twisted loop algebra $A^{(2)}_2$ we have an additional constraint: The spin $j$ is integer when 
$k-3$ is even and $j$ is a half-integer when $k-3$ is odd. 

The degree of the zero mode line bundle is computed as in the $SU(2)$ case. The zero modes are localized 
in the finite-dimensional vacuum subspace of the  Hilbert space $H_{j,k}$ which carries a representation $j\otimes 1/2$ 
of $\bold{so(3)}.$ (In general, the zero mode bundle has rank equal to the multiplicity of the representation $\rho$ inside of the spin 
representation of the Clifford algebra of  the group of constant loops; here however the Clifford algebra representation
is two dimensional and at the same time the irreducible fundamental representation of the Lie algebra $\bold{so(3)}.$)
 When $x= (2j+1)/k$ the complex line is spanned by the highest weight vector $v_{j+1/2}$ of $\bold{so(3)}$ 
  weight $j+1/2,$ \cite{FHT}. The zero mode bundle is then the associated line bundle $L= SU(3) \times_{2j+1} \mathbb C,$ defined by
the principal bundle $SO(2) \to SU(3) \to M_7$ through the one dimensional representation of $SO(2)$ with character 
$2j +1.$  In particular, for even $k$ the spin $j$ is a half-integer and thus the degree of the zero mode bundle is 
even. This is in accordance with the known result $K^1(SU(3), {\bf  k}) = \mathbb Z/(k/2)\mathbb Z$ for even $k.$ Twisted $K^1$ 
is a rank one module over the untwisted $K^0(SU(3))$, tensoring with any of the elements in $K^1(SU(3), {\bf  k})$ gives 
only even elements in $\mathbb Z_k.$ 

In the case when $k$ is odd, the degree of the zero mode bundle is also odd and the tensor product operation 
$K^0(SU(3)) \times K^1(SU(3), {\bf  k}) \to K^1(SU(3),{\bf  k} ),$ gives both the even and odd elements in $\mathbb Z_k.$ 

The eta forms detect the twisted K-theory classes in a similar way as in the case of $SU(2).$  In the present 
setting we have nonzero eta forms in even degrees up to form degree six. However, they do not contain 
independent information since for any fixed level $k$ the only parameter is the twisting of the line bundle 
$L$ which is given by the integer $2j+1.$ To determine this integer from the differential data one proceeds
as in the case of  $SU(2)$ above.  Consider the subgroup $SU(2) \subset SU(3)$ given by the the matrices  
with $+1$ on the diagonal in the lower right corner. The intersection of $SU(2)=S^3$ with the orbit $M_7$ is a
union of two spheres
 $S^2_a$ and $S^2_b.$ The reason for this is that the points $h(x)$ and $h(-x)$ are conjugate to each other
 by the twisted action of $g=diag(1,-1,-1)$ which lies outside of $SU(2);$ one can see easily that $h(\pm x)$ are not conjugate
 by elements of $SU(2)$ and their union is $M_7 \cap SU(2).$ So the differential twisted Chern 
character form  is given by 
 \[
 (d-H) ( \eta_{[0]} + \dots +  \eta_{[6]}).
\]
  Picking up the component of form degree 3 we get
\[
\begin{array}{lll}
\disp{\int_{SU(2)} } \eta_{[0]} H  & = & \int_{S^2}   c_1(L_j, \nabla_j)   \mod c_G(k) \\[2mm]
&=&  2(2j+1)    \mod c_G(k) , 
\end{array}
\]
the factor $2$ coming from the integration of $c_1$ over the two distinct 2-spheres. Thus again,   we get 

\[
\int_{SU(2)}  \eta_{[0]} H_0 = -  \dfrac {2(2j+1)}{k} \mod c_G(k)/k.
\]

  \end{document}